\newcommand{\R}{\mathbb{R}}
\newcommand{\C}{\mathbb{C}}
\newcommand{\F}{{\mathcal{F}}}
\newcommand{\vx}{\mathbf{x}}
\newcommand{\rmd}{\mathrm{d}}
\newcommand{\psihn}[1]{\psi^{#1}}
\journal{ }
\theoremstyle{plain}
\newtheorem{theorem}{Theorem}[section]
\newtheorem{lemma}[theorem]{Lemma}
\newtheorem{proposition}[theorem]{Proposition}
\theoremstyle{definition}
\theoremstyle{remark}
\newtheorem{remark}[theorem]{Remark}
\numberwithin{equation}{section}
\numberwithin{figure}{section}
\numberwithin{table}{section}
\crefname{exmp}{Example}{Examples}
\crefname{hypothesis}{Hypothesis}{Hypotheses}
\crefname{conj}{Conjecture}{Conjectures}
\begin{document}
	
	\begin{frontmatter}
		
		
		
		\title{A Lawson-time-splitting extended Fourier pseudospectral method for the Gross-Pitaevskii equation with time-dependent low regularity potential}
		
		\author[label1]{Bo Lin}\ead{linbo@u.nus.edu}
		\author[label2]{Ying Ma}\ead{maying@bjut.edu.cn}
		\author[label1]{Chushan Wang}\ead{e0546091@u.nus.edu}
		\affiliation[label1]{organization={Department of Mathematics, National University of Singapore},
			city={Singapore},
			postcode={119076},
			country={Singapore}}
		
		\affiliation[label2]{organization={Department of Mathematics, School of Mathematics, Statistics and Mechanics, Beijing University of Technology},
			city={Beijing},
			postcode={100124},
			country={China}}
		
		%
		
		\begin{abstract}
			We propose a Lawson-time-splitting extended Fourier pseudospectral (LTSeFP) method for the numerical integration of the Gross-Pitaevskii equation with time-dependent potential that is of low regularity in space. For the spatial discretization of low regularity potential, we use an extended Fourier pseudospectral (eFP) method, i.e., we compute the discrete Fourier transform of the low regularity potential in an extended window. For the temporal discretization, to efficiently implement the eFP method for time-dependent low regularity potential, we combine the standard time-splitting method with a Lawson-type exponential integrator to integrate potential and nonlinearity differently. The LTSeFP method is both accurate and efficient: it achieves first-order convergence in time and optimal-order convergence in space in $L^2$-norm under low regularity potential, while the computational cost is comparable to the standard time-splitting Fourier pseudospectral method. Theoretically, we also prove such convergence orders for a large class of spatially low regularity time-dependent potential. Extensive numerical results are reported to confirm the error estimates and to demonstrate the superiority of our method. 
		\end{abstract}
		
		
		
		\begin{keyword}
			nonlinear Schr\"odinger equation \sep time-dependent low regularity potential \sep Lawson integrator \sep time-splitting method \sep extended Fourier pseudospectral method \sep error estimate
			
			\MSC[2020] 35Q55 \sep 65M15 \sep 65M70 \sep 81Q05
		\end{keyword}
		
	\end{frontmatter}
	
	
	\section{Introduction}
	In this paper, we consider the following Gross-Pitaevskii equation (GPE) with time-dependent potential as
	\begin{equation}\label{NLSE}
		\left\{
		\begin{aligned}
			&i \partial_t \psi(\vx, t) = -\Delta \psi(\vx, t) + V(\vx, t) \psi(\vx, t) + \beta |\psi(\vx, t)|^2 \psi(\vx, t), && \vx \in \Omega, \  t>0, \\
			&\psi(\vx, 0) = \psi_0(\vx), && \vx \in \overline{\Omega},
		\end{aligned}
		\right.
	\end{equation}
	where $t$ is time, $\vx \in \R^d$ is the spatial coordinate, $ \psi:=\psi(\vx, t) $ is a complex-valued wave function, $ V := V(\vx, t) $ is a given real-valued time-dependent external potential, and $\beta \in \R$ is some given constant. Here we are particularly interested in $V$ that is of low regularity in space, i.e., $V(\cdot, t) \in L^\infty(\Omega)$ for $t \geq 0$. 
	
	The Gross-Pitaevskii equation, which is also known as the cubic nonlinear Schr\"odinger equation (NLSE), is widely adopted in physics applications such as the modeling and simulation of Bose-Einstein condensation (BEC), nonlinear optics and plasma physics, and we refer to \cite{ESY,review_2013,NLS,NLSopt,Ant,bao2003JCP} for the mathematical theory and numerical methods. Due to different physical settings, the potential $V$ may take different forms. In particular, many of them may be of low regularity in space, such as the square-well or step potential where $V$ is discontinuous in space; the triangular potential or the power law potential where $V$ has spatially discontinuous or unbounded derivatives; or the random or disorder potential which is very rough in space. For the applications involving the GPE with low regularity potential, we refer to \cite{bao2023_EWI,bao2023_eFP} and references therein for the details. 
	
	To solve the GPE \cref{NLSE} with the aforementioned low regularity time-independent potential, many works have been done recently. In \cite{henning2017}, the Crank-Nicolson Galerkin method is analyzed, which is the first error estimate for the GPE with purely $L^\infty$-potential. Later, some low regularity integrators are designed to reduce the regularity requirements on potential and the exact solution at the same time \cite{zhao2021,bronsard2022}. Recently, some Gautschi-type exponential wave integrators (EWI) were designed and proved to be of optimal convergence orders in the presence of low regularity potential \cite{bao2023_EWI,bao2023_sEWI}. Similar optimal error bounds of time-splitting methods are also established under a CFL-type time step size restriction \cite{bao2023_improved,bao2023_eFP}. Here, we also mention some related works that consider the temporally highly oscillatory potential \cite{su2020} and that account for low regularity nonlinearity \cite{bao2023_semi_smooth,bao2023_EWI,bao2023_sEWI,bao2023_improved}. 
	
	However, all the works mentioned above have primarily focused on low regularity potential that is static. In many experimental settings, it is necessary to take into account the time-dependency of potential or one wants to manipulate the external potential dynamically. For example, the amplitude modulation of trapping potential is used to study the laser cooling technique \cite{PRAampmod}. In the experimental study of BEC, the narrow potential barrier or well is applied by a focused laser beam, whose power gradually increases from zero at the start to a maximum value, which is found to be essential to the observation of expected phenomena \cite{Atdelta,PRAmovobsbox}. In addition to the time dependency of amplitudes, many applications require the incorporation of a moving potential. Typical examples include in the BEC setting a moving obstacle in superfluid, which will cause excitations and lead to fruitful phenomena such as quantized vertices \cite{PRLmovingobstacle} or self-interacting matter wave patterns \cite{PRAmovobsbox}. A Similar moving potential is also considered in quantum transport \cite{PRAmovDelta,PRAmovDeltaQD,Transportboth}. Actually, in many applications including those mentioned above, the potential is time-dependent in both amplitude and location. 
	
	Considering such wide applications, the development of accurate and efficient numerical methods with rigorous error analysis for the GPE with time-dependent spatially low regularity potential is paramount. In this work, we design a Lawson-time-splitting extended Fourier pseudospectral (LTSeFP) method to solve the GPE \cref{NLSE} with time-dependent low regularity potential, which generalizes existing works on time-independent potential. We also rigorously prove that the LTSeFP method converges at first order in time and at optimal order with respect to the regularity of the exact solution in space for a large class of time-dependent spatially $L^\infty$ potential. In the following, we briefly explain the idea of the design and analysis of this method. 
	
	For spatial discretization, the extended Fourier pseudospectral (eFP) method is adopted due to spatially low regularity potential, which can provide optimal convergence orders as shown in \cite{bao2023_eFP}. To implement the eFP method together with the standard time-splitting method in time, one needs to precompute ``exact" Fourier coefficients of the exponential function of potential. When the potential is time-dependent, this preprocessing needs to be done for low regularity potential at all time steps. Although it is a one-time work for each given potential and a fixed temporal mesh, the computational cost of this preprocessing could still be expensive, potentially constraining its practical applications. In fact, for a majority of physically relevant potential, the time dependency lies only in the amplitude and location. Leveraging this aspect, we propose a modification of the time-splitting method: expand the exponential function of low regularity potential to first order. With such modification, the Fourier coefficients of the function modulated in amplitude and location can be easily obtained as some multiple or phase shift of Fourier coefficients of the original function. 
	Then the workload of preprocessing is significantly reduced to computing the Fourier coefficients of a time-independent function, which can thus be neglected. Consequently, the computational efficiency of the LTSeFP method is comparable to that of the highly efficient standard time-splitting Fourier pseudospectral method. However, the accuracy of the LTSeFP method is significantly higher than the time-splitting Fourier pseudospectral method in the presence of spatially low regularity potential. 
	
	In terms of the error estimates with spatially low regularity potential, we use the regularity compensation oscillation (RCO) technique, which was originally proposed in \cite{RCO_SE,RCO_KG} to analyze long-time errors and generalized in \cite{bao2023_improved,bao2023_sEWI} to obtain error bounds for time-independent low regularity potential. By RCO, we estimate high-frequency errors by the regularity of the exact solution and analyze the error cancellation in the accumulation of low-frequency errors. As a result, the regularity requirement on potential is weakened from second-order spatially differentiable to first-order temporally differentiable, which allows us to establish error estimates for spatially $L^\infty$ potential. In the proof, a CFL-type time step size restriction is imposed which is necessary and optimal as justified by extensive numerical results. 
	
	The rest of the paper is organized as follows. In \cref{sec:2}, we present the LTSeFP method and explain some implementation issues. \cref{sec:3} is devoted to the error estimates of the LTSeFP method. Numerical results are reported in \cref{sec:4}. Finally, some conclusions are drawn in \cref{sec:5}. Throughout the paper, we adopt standard notations of Sobolev spaces as well as their corresponding norms. We denote by $ C $ a generic positive constant independent of the mesh size $ h $ and time step size $ \tau $, and by $ C(\alpha) $ a generic positive constant depending only on the parameter $ \alpha $. The notation $ A \lesssim B $ is used to represent that there exists a generic constant $ C>0 $, such that $ |A| \leq CB $. 
	
	\section{A Lawson-time-splitting extended Fourier pseudospectral method}\label{sec:2}
	In this section, we introduce the Lawson-time-splitting extended Fourier pseudospectral method (LTSeFP) to solve the NLSE with time-dependent low regularity potential. For simplicity of the presentation and to avoid heavy notations, we only present the numerical schemes in one dimension (1D) and take $ \Omega = (a, b) $. Generalizations to two dimensions (2D) and three dimensions (3D) are straightforward and the error estimates remain unchanged. We define periodic Sobolev spaces as (see, e.g. \cite{bronsard2022}, for the definition in phase space)
	\begin{equation*}
		H_\text{per}^m(\Omega) := \{\phi \in H^m(\Omega) : \phi^{(k)}(a) = \phi^{(k)}(b), \ k=0, \cdots, m-1\}, \quad m \geq 1. 
	\end{equation*}
	
	\subsection{Semidiscretization in time}
	Based on the operator splitting techniques, the NLSE \cref{NLSE} can be decomposed into two subproblems. The first one is  
	\begin{equation}
		\left\{
		\begin{aligned}
			&\partial_t \psi(x, t+s) = i \Delta \psi(x, t+s), \quad x \in \Omega, \quad s>0, \\
			&\psi(x, t) = \psi_0(x), \quad x \in \overline{\Omega},
		\end{aligned}
		\right.
	\end{equation}
	which can be formally integrated exactly in time as
	\begin{equation}\label{eq:linear_step}
		\psi(\cdot, t+s) = e^{i s \Delta} \psi_0(\cdot), \qquad s \geq 0.
	\end{equation}
	The second one is
	\begin{equation}
		\left\{
		\begin{aligned}
			&\partial_t \psi(x, t+s) = -iV(x, t+s)\psi(x, t+s) -i f(|\psi(x, t+s)|^2)\psi(x, t+s), \quad x \in \Omega, \quad  s>0, \\
			&\psi(x, t) = \psi_0(x), \quad x \in \overline{\Omega},
		\end{aligned}
		\right.
	\end{equation}
	which, by noting
	$|\psi(x, t)|=|\psi_0(x)|$ for $t \geq 0$, can be integrated exactly in time as
	\begin{equation}\label{eq:phi_B_tilde_def}
		\psi(x, t+s) = {\widetilde \Phi_B}^{t, s} (\psi_0)(x) := \psi_0(x)e^{- i \int_0^s V(x, t+\sigma) \rmd \sigma - i s f(|\psi_0(x)|^2))}, \quad x \in \overline{\Omega}, \quad s \geq 0. 
	\end{equation}
	Here, we make a further approximation to the nonlinear step ${\widetilde \Phi_B}^{t, s} (\phi) $ as 
	\begin{equation}\label{eq:approximation}
		{\widetilde \Phi_B}^{t, s} (\phi)(x) = e^{- i \int_0^s V(x, t+s) \rmd s} \phi(x) e^{- i s f(|\phi(x)|^2)} \approx \left( 1 - i \tau V(x, t) \right) \left( \phi(x) e^{- i s f(|\phi(x)|^2)} \right) =: {\widetilde \varphi_B}^{t, s} (\phi)(x).   
	\end{equation}
	As we shall show in the following, with this modified approximation to potential, the computational cost of the preprocessing for the full discretization scheme can be greatly reduced. 
	
	Choose a time step size  $ \tau > 0 $, denote time steps as $ t_n = n \tau $ for $ n = 0, 1, ... $, and let $ \psi^{[n]}(\cdot) $ be the approximation of $ \psi(\cdot, t_n) $ for $ n \geq 0 $. Then a Lawson-time-splitting (LTS) approximation of the NLSE \cref{NLSE} based on the Lie-Trotter splitting scheme is given as
	\begin{equation}\label{eq:LT}
		\psi^{[n+1]} = e^{i \tau \Delta} {\widetilde \varphi}_{B}^{t_n, \tau}\left (\psi^{[n]} \right ), \quad n \geq 0, 
	\end{equation}
	with $ \psi^{[0]}(x) = \psi_0(x) $ for $x\in\overline{\Omega}$. 
	
	\begin{remark}\label{rem:Lawson}
		When $f(\rho) \equiv 0$ in the GPE \cref{NLSE}, the LTS method collapses to the first-order Lawson integrator \cite{ExpInt,feng2022lawson}. When $V(x) \equiv 0$ in \cref{NLSE}, the LTS method collapses to the standard Lie-Trotter time-splitting method \cite{su2020,schratz2016}. 
	\end{remark}
	
	\subsection{Full discretization}
	We further discretize the semi-discretization \eqref{eq:LT} in space to obtain a fully discrete scheme. Choose a mesh size $h = (b-a)/N$ with $N$ being a positive even integer, and denote the grid points as
	\begin{equation}
		x_j = a + jh, \qquad j \in \mathcal{T}_N^0 := \{0, 1, \cdots, N\}. 
	\end{equation}
	Define the index set of frequencies as
	\begin{equation}
		\mathcal{T}_N = \left\{-\frac{N}{2}, \cdots, \frac{N}{2}-1 \right\}. 
	\end{equation}
	Denote
	\begin{align}
		&X_N = \text{span}\left\{e^{i \mu_l(x - a)}: l \in \mathcal{T}_N\right\}, \quad \mu_l = \frac{2 \pi l}{b-a}, \\
		&Y_N = \{v=(v_0, v_1, \cdots, v_N)^T \in \C^{N+1}: v_0 = v_N\}. 
	\end{align}
	Let $ P_N:L^2(\Omega) \rightarrow X_N $ be the standard $ L^2 $-projection onto $ X_N $ and $I_N: Y_N \rightarrow X_N$ be the standard Fourier interpolation operator as 
	\begin{equation}
		(P_N u)(x) = \sum_{l \in \mathcal{T}_N} \widehat u_l e^{i \mu_l(x - a)}, \quad (I_N v)(x) = \sum_{l \in \mathcal{T}_N} \widetilde{v}_l e^{i \mu_l(x - a)},  \qquad x \in \overline{\Omega} = [a, b], 
	\end{equation}
	where $ u \in L^2(\Omega) $, $v \in Y_N$ and
	\begin{equation}\label{eq:hat}
		\widehat{u}_l = \frac{1}{b-a} \int_a^b u(x) e^{-i \mu_l (x-a)} \rmd x, \quad \widetilde{v}_l = \frac{1}{N} \sum_{j=0}^{N-1} v_j e^{- i \mu_l (x_j - a)}, \qquad l \in \mathcal{T}_N. 
	\end{equation}
	The interpolation operator $I_N$ can be similarly defined on $C_\text{per}(\overline{\Omega}) := \{\phi \in C(\overline{\Omega}): \phi(a) = \phi(b)\}$ by identifying $\phi \in C_\text{per}(\overline{\Omega})$ with $ v = (v_0, \cdots, v_N)^T \in Y_N$ satisfying $v_j  = \phi(x_j)$ for $j \in \mathcal{T}_N^0$.  
	
	Let $\psihn{n}_j$ be the numerical approximation to $\psi(x_j, t_n)$ for $j \in \mathcal{T}_N^0$ and $n \geq 0$, and denote $\psihn{n}: = (\psihn{n}_0, \psihn{n}_1, \cdots, \psihn{n}_{N})^T \in Y_N$. Then the Lawson-time-splitting extended Fourier pseudospectral (LTSeFP) method reads 
	\begin{equation}\label{LTEFP_scheme}
		\begin{aligned}
			\begin{aligned}
				&\psi^{(1)}(x) =  \left( 1 - i \tau V(x, t_n) \right) I_N \left( \psihn{n} e^{-i \tau f(|\psihn{n}|^2)} \right)(x), \quad x \in \Omega \\
				&\psihn{n+1}_j = \sum_{l \in \mathcal{T}_N} e^{- i \tau \mu_l^2} \widehat{(\psi^{(1)})}_l e^{i \mu_l(x_j-a)}, \quad j \in \mathcal{T}_N^0, \quad n \geq 0, 
			\end{aligned} 
		\end{aligned}
	\end{equation}
	where $\psihn{0}_j = \psi_0(x_j)$ for $j \in \mathcal{T}_N^0$. 
	
	\begin{remark}[Fourier pseudospectral method]\label{rem:FP}
		For comparison, we also present here the standard time-splitting Fourier pseudospectral (TSFP) method
		\begin{equation*}\label{TSFP_scheme}
			\begin{aligned}
				\begin{aligned}
					&\psi^{(1)}_j =  \psihn{n}_j e^{-i \tau (V(x_j, t_n) + f(|\psihn{n}_j|^2))}, \quad j \in \mathcal{T}_N^0, \\
					&\psihn{n+1}_j = \sum_{l \in \mathcal{T}_N} e^{- i \tau \mu_l^2} \widetilde{(\psi^{(1)})}_l e^{i \mu_l(x_j-a)}, \quad j \in \mathcal{T}_N^0, \quad n \geq 0, 
				\end{aligned} 
			\end{aligned}
		\end{equation*}
		and the Lawson-time-splitting Fourier pseudospectral (LTSFP) method
		\begin{equation*}\label{LTSFP_scheme}
			\begin{aligned}
				\begin{aligned}
					&\psi^{(1)}_j =  \left( 1 - i \tau V(x_j, t_n) \right) \left( \psihn{n}_j e^{-i \tau f(|\psihn{n}_j|^2)} \right), \quad j \in \mathcal{T}_N^0, \\
					&\psihn{n+1}_j = \sum_{l \in \mathcal{T}_N} e^{- i \tau \mu_l^2} \widetilde{(\psi^{(1)})}_l e^{i \mu_l(x_j-a)}, \quad j \in \mathcal{T}_N^0, \quad n \geq 0, 
				\end{aligned} 
			\end{aligned}
		\end{equation*}
		where $\psihn{0}_j = \psi_0(x_j)$ for $j \in \mathcal{T}_N^0$ in both schemes. 
	\end{remark}
	
	To implement the LTSeFP scheme \cref{LTEFP_scheme}, one needs to compute, at the $n$-th time step, $N$ Fourier coefficients of $\psi^{(1)}$. Then the main effort is to compute the Fourier coefficients of
	\begin{equation}
		V (\cdot, t_n)  I_N \left( \phi e^{- i \tau f(|\phi|^2)} \right)(\cdot), \quad \phi \in X_N, 
	\end{equation}
	which can be obtained by using FFT for an extended vector as shown below. Let $w=I_N \left( \phi e^{- i \tau f(|\phi|^2)} \right)$. Noting that $ w \in X_N$ and that $I_{4N}$ is an identity on $X_{4N}$, we have
	\begin{equation}
		P_N \left( V (\cdot, t_n) w \right) = P_N \left( P_{2N} (V (\cdot, t_n)) w \right) = P_N I_{4N} \left( P_{2N} (V (\cdot, t_n)) w \right), 
	\end{equation}
	which implies
	\begin{equation}\label{eq:Vw_hat}
		\widehat{\left( V (\cdot, t_n) w \right)}_l = \widetilde{G}_l, \quad l \in \mathcal{T}_N,   
	\end{equation}
	where $G \in Y_{4N}$ and 
	\begin{equation}\label{eq:G_def}
		G_j = P_{2N} (V (\cdot, t_n)) (y_j) \times w(y_j), \quad y_j = a + j \frac{b-a}{4N}, \quad  j \in \mathcal{T}^0_{4N}. 
	\end{equation}
	In terms of \cref{eq:Vw_hat,eq:G_def}, to efficiently implement the LTSeFP method, one can precompute the Fourier coefficients of $V$ at each $t_n$, i.e.,
	\begin{equation}\label{eq:V_hat_eFP}
		\widehat{V(\cdot, t_n)_l}, \quad l \in \mathcal{T}_{2N}, \quad  n \geq 0. 
	\end{equation}
	On one hand, for a given time-dependent potential $V$ and a fixed temporal mesh, this only needs to be done once in practice. On the other hand, for a large class of physically relevant time-dependent potential, due to the use of Lawson-type integrator for the temporal discretization of the potential function (i.e., the approximation \cref{eq:approximation}) in the LTSeFP scheme, the computation of \cref{eq:V_hat_eFP} can be done very efficiently as we shall present in the following.  
	
	Usually, the time-dependent potential $V(x, t)$ that is of physics interest can be summarized as (the composition of) the following two typical forms:
	\begin{enumerate}[label = (\roman*)]
		\item Space-time separable potential: 
		\begin{equation}\label{eq:V_sep}
			V(x, t) = A(t)V_0(x), \quad x \in \Omega, \quad t \geq 0. 
		\end{equation}
		
		\item Moving potential: (recalling that $\Omega$ is equipped with periodic boundary condition)
		\begin{equation}\label{eq:V_trans}
			V(x, t) = V_0(x + \alpha(t)), \quad x \in \Omega, \quad t \geq 0. 
		\end{equation}
		
	\end{enumerate}
	For these two types of potential, the computation of the Fourier coefficients of $V(\cdot, t)$ at different time $t$ can be reduced to the computation of the Fourier coefficients of $V_0$, which is time-independent. In particular, for the space-time separable potential \cref{eq:V_sep}, we have
	\begin{equation}
		\widehat{V(\cdot, t)_l} = A(t) \widehat{(V_0)_l}, \quad l \in \mathcal{T}_{2N}, \quad t \geq 0, 
	\end{equation}
	and for the moving potential \cref{eq:V_trans}, we have
	\begin{equation}
		\widehat{V(\cdot, t)_l} = e^{i \mu_l \alpha(t)} \widehat{(V_0)_l}, \quad l \in \mathcal{T}_{2N}, \quad t \geq 0. 
	\end{equation}
	
	As a result, as long as one precomputed the Fourier coefficients of a time-independent potential $V_0$, the LTSeFP method can be directly implemented with various time-dependent potentials given by \cref{eq:V_sep} or \cref{eq:V_trans} without any additional computational cost for the preprocessing. 
	
	Thus, the LTSeFP method is as efficient as the standard time-splitting Fourier pseudospectral method or the Lawson-time-splitting Fourier pseudospectral method shown in \cref{rem:FP}. However, the spatial convergence order of the LTSeFP method is optimal with respect to the regularity of the exact solution: for $H^m$-solution, the spatial error is at $O(h^m)$ in $L^2$-norm. In other words, the eFP method can always provide optimal spatial convergence orders in practical computation which is not the case for the Fourier pseudospectral method as we shall present in \cref{sec:4}. 
	
	\begin{remark}
		Sometimes, the time-dependent potential can be separated into two parts $ V(x, t) = V_1(x, t) + V_2(x, t) $ where $V_1$ is spatially low regularity while $V_2$ is spatially sufficiently smooth (i.e., $V_2(\cdot, t)$ has at least the same regularity as the exact solution $\psi(\cdot, t)$). In such case, one only needs to apply the eFP method to the low regularity part of potential and the corresponding full discretization scheme \cref{LTEFP_scheme} can be modified as
		\begin{equation*}
				\begin{aligned}
					&\psi^{(1)}(x) =  \left( 1 - i \tau V_1(x, t_n) \right) I_N \left( \psihn{n} e^{-i \tau (V_2^n + f(|\psihn{n}|^2))} \right)(x), \quad x \in \Omega, \\
					&\psihn{n+1}_j = \sum_{l \in \mathcal{T}_N} e^{- i \tau \mu_l^2} \widehat{(\psi^{(1)})}_l e^{i \mu_l(x_j-a)}, \quad j \in \mathcal{T}_N^0, \quad n \geq 0, 
				\end{aligned} 
		\end{equation*}
		where $V_2^n = (V_2(x_0, t_n), \cdots, V_2(x_N, t_n))^T \in Y_N$ and $\psihn{0}_j = \psi_0(x_j)$ for $j \in \mathcal{T}_N^0$. 
	\end{remark}
	
	\section{Error estimates of the LTSeFP method}\label{sec:3}
	In this section, we shall prove the error estimate of the LTSeFP method for the GPE \cref{NLSE} with spatially low regularity potential. Before presenting the main results, we first state our assumptions on potential and the exact solution. Let $0<T<T_\text{max}$ with $T_\text{max}>0$ being the maximum existence time of the exact solution of the GPE \cref{NLSE}. For time-dependent low regularity potential, we assume that 
	\begin{equation}\label{A1}
		V \in C([0, T]; L^\infty(\Omega)) \cap C^1([0, T]; L^2(\Omega)). 
	\end{equation}
	For the exact solution, we assume that 
	\begin{equation}\label{A2}
		\psi \in C([0, T]; H^m_\text{per}(\Omega)) \cap C^1([0, T]; L^2(\Omega)), \quad m \geq 2. 
	\end{equation}
	\begin{theorem}\label{thm:main}
		Under the assumptions \cref{A1,A2}, there exists $h_0>0$ small enough such that when $0<h<h_0$ and $\tau \leq h^2/\pi$, we have
		\begin{equation*}
			\| \psi(\cdot, t_n) - \psihn{n} \|_{L^2} \lesssim \tau + h^m, \quad \| \psi(\cdot, t_n) - \psihn{n} \|_{H^1} \lesssim \tau^\frac{1}{2} + h^{m-1}, \quad 0 \leq n \leq T/\tau. 
		\end{equation*}
	\end{theorem}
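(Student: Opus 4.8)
The plan is to split the global error into a spatial part, handled by the extended Fourier pseudospectral (eFP) approximation theory, and a temporal part, handled by the regularity compensation oscillation (RCO) technique, and then close a discrete Gronwall recursion. Writing $e^n := \psi(\cdot,t_n) - I_N\psihhn{n}$ and inserting the $L^2$-projection $P_N$, one has $e^n = (\psi(\cdot,t_n)-P_N\psi(\cdot,t_n)) + (P_N\psi(\cdot,t_n)-I_N\psihhn{n})$. The first term is the spectral projection error, $\lesssim h^m\|\psi(\cdot,t_n)\|_{H^m}$ in $L^2$ and $\lesssim h^{m-1}$ in $H^1$ by \cref{A2}; the optimal-order treatment of the rough potential inside the scheme is exactly what the extended Fourier construction of \cite{bao2023_eFP} delivers. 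It then remains to bound the discrete error $\hat e^n := P_N\psi(\cdot,t_n)-I_N\psihhn{n}\in X_N$, for which I would derive a recursion of the form $\hat e^{n+1}=e^{i\tau\Delta}(\hat e^n + \mathcal{S}^n) + \mathcal{R}^n$, with $\mathcal{S}^n$ the Lipschitz perturbation from the potential and nonlinearity flow and $\mathcal{R}^n$ the local truncation error, and close it by a Lady Windermere's fan argument.

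Stability enters through a one-step $L^2$-Lipschitz bound with constant $1+C\tau$: the free propagator $e^{i\tau\Delta}$ is an $L^2$-isometry, the phase $e^{-i\tau f(|\cdot|^2)}$ is Lipschitz once an a priori $L^\infty$ bound on $\psihhn{n}$ is in hand, and the amplitude $(1-i\tau V(\cdot,t_n))$ has operator norm $\le 1+C\tau^2$ since $|1-i\tau V|=\sqrt{1+\tau^2V^2}$ with $\|V\|_{L^\infty}$ finite by \cref{A1}. The required $L^\infty$ bound I would obtain by bootstrapping inside the induction: the inductive $H^1$ estimate together with the $1$D embedding $H^1\hookrightarrow L^\infty$ keeps $\|I_N\psihhn{n}\|_{L^\infty}$ bounded, validating the Lipschitz constant used to pass from step $n$ to $n+1$.

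The crux, and the step I expect to be hardest, is the local truncation error $\mathcal{R}^n$. Its benign part comes from the first-order expansion \cref{eq:approximation}: the quadratic remainder is $O(\tau^2\|V\|_{L^\infty}^2)$ in $L^\infty$, and the time-averaging defect $\int_0^\tau(V(\cdot,t_n+\sigma)-V(\cdot,t_n))\,\rmd\sigma$ is $O(\tau^2)$ in $L^2$ by the $C^1([0,T];L^2)$ regularity in \cref{A1}. The dangerous part is the Lie--Trotter splitting error, whose formal leading term is the commutator $[\Delta,V]\psi$; since $V\in L^\infty$ only, this object is undefined classically and a naive summation of the one-step errors loses a full power of $\tau$. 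Here the RCO technique of \cite{RCO_SE,bao2023_improved,bao2023_sEWI} is indispensable: rather than differentiating $V$ in space, I would expand the accumulated error in Fourier modes so that the obstruction becomes sums of the type $\sum_l(e^{-i\tau\mu_l^2}-1)\,\widehat{(V\psi)}_l$, and then split at a frequency cutoff. For high frequencies the coefficients are controlled by $\|V\|_{L^\infty}$ and the $H^m$-decay of $\widehat\psi_l$, costing only $O(\tau)$; for low frequencies the CFL condition $\tau\le h^2/\pi$ forces $\tau\mu_l^2\le\pi$, taming the oscillatory phases, and a summation-by-parts in $n$ that uses precisely the $C^1$-in-time regularity of $V$ extracts the cancellation that converts an apparent $O(1)$ accumulation into $O(\tau)$. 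This is the mechanism trading the missing spatial smoothness of $V$ for its temporal smoothness.

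Feeding these into the discrete Gronwall inequality yields $\|\hat e^n\|_{L^2}\lesssim\tau$, and adding the projection error gives $\|\psi(\cdot,t_n)-I_N\psihhn{n}\|_{L^2}\lesssim\tau+h^m$. For the $H^1$ bound the half-order loss in time is structural: differentiating the potential term is forbidden for $L^\infty$ potential, so no full first-order rate survives in $H^1$. I would rerun the RCO/Gronwall estimate with the frequency weight $\langle\mu_l\rangle$ inserted and balance the low- and high-frequency contributions across the cutoff; the extra factor $\langle\mu_l\rangle$ weighed against the CFL ceiling $\tau\mu_l^2\le\pi$ fixes the optimal cutoff and produces the $O(\tau^{1/2})$ temporal rate, which together with the $O(h^{m-1})$ projection error yields $\|\psi(\cdot,t_n)-I_N\psihhn{n}\|_{H^1}\lesssim\tau^{1/2}+h^{m-1}$.
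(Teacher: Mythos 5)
Your architecture for the $L^2$ bound is essentially the paper's: split off the $P_N$-projection error, derive a one-step recursion for the discrete error in $X_N$, prove an $L^\infty$-conditional $L^2$-Lipschitz stability of the potential/nonlinearity step (\cref{prop:stability}), and tame the accumulated dominant splitting error by summation by parts in $n$ under the CFL condition, with the geometric sums $S_{n,l}$ bounded by $(\tau\mu_l^2)^{-1}$ as in \cref{eq:S_est} and the increment bound $\|\phi^{k+1}-\phi^k\|_{L^2}\lesssim\tau^3$ of \cref{eq:diff_phin_est}, which is exactly where $\partial_t V\in L^\infty([0,T];L^2)$ from \cref{A1} enters. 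One inaccuracy in your RCO step, though: you propose a frequency cutoff with the high modes ``controlled by $\|V\|_{L^\infty}$ and the $H^m$-decay of $\widehat{\psi}_l$,'' but the function whose Fourier coefficients actually appear in $\mathcal{E}^n_{\rm dom}$ is $V\psi+\F(\psi)$, and the product $V\psi$ inherits \emph{no} Fourier decay from $\psi$ when $V$ is merely $L^\infty$. Fortunately no cutoff is needed: under $\tau\le h^2/\pi$ every mode $l\in\mathcal{T}_N$ satisfies $\tau\mu_l^2\le\pi$, and $|S_{n,l}|\lesssim(\tau\mu_l^2)^{-1}$ cancels the Laplacian's $\mu_l^2$ exactly, so the argument uses only $L^2$-bounds on the Duhamel integrand and its increments; the $H^m$ regularity of $\psi$ is spent solely on the $O(\tau h^m)$ projection/interpolation terms (via the trick of \cref{lem:Hm_bound}, writing $\phi e^{-i\tau f(|\phi|^2)}=\phi+\phi(e^{-i\tau f(|\phi|^2)}-1)$ so that $I-I_N$ acts only on an $O(\tau)$-small smooth perturbation).

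The genuine gap is your plan for the $H^1$ bound and the $L^\infty$ bootstrap. Rerunning a $\langle\mu_l\rangle$-weighted RCO/Gronwall argument founders not on the truncation error (your balancing does produce $\tau^{1/2}$ there) but on \emph{stability}: for $V\in L^\infty$ the potential step is not $H^1$-Lipschitz with constant $1+C\tau$; the best available is $\|P_N(Vw)\|_{H^1}\lesssim h^{-1}\|V\|_{L^\infty}\|w\|_{L^2}$, giving a per-step factor $1+C\tau/h\sim 1+C\tau^{1/2}$ whose product over $T/\tau$ steps behaves like $e^{CT\tau^{-1/2}}$ and destroys the Gronwall closure. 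This also undercuts your $L^\infty$ control, which rests on an ``inductive $H^1$ estimate'' plus the ($d=1$ only) embedding $H^1\hookrightarrow L^\infty$: there is no independent $H^1$ induction to invoke, so the bootstrap is circular as stated. The paper sidesteps both issues with inverse inequalities on $X_N$: $\|\phi\|_{L^\infty}\le C_{\rm inv}h^{-d/2}\|\phi\|_{L^2}$ converts the already-established $L^2$ bound $\tau+h^m$ into the a priori $L^\infty$ bound needed inside the induction (since $h^{-d/2}(\tau+h^m)\to 0$ for $m\ge 2$ and $\tau\lesssim h^2$), and $\|\phi\|_{H^1}\lesssim h^{-1}\|\phi\|_{L^2}$ together with $\tau\lesssim h^2$ yields $h^{-1}(\tau+h^m)\lesssim\tau^{1/2}+h^{m-1}$ in one line --- no weighted RCO at all. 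With these two substitutions your outline matches the paper's proof.
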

	
	\begin{remark}
		Compared to the standard error estimates of time-splitting methods for a time-dependent spatially smooth potential (e.g., \cite{su2020}), we require an additional $C^1$ temporal regularity of $V$ in \cref{A1}. This is essentially due to the low regularity of $V$ in space instead of the modification by the Lawson-type exponential integrator. In fact, such additional regularity in time is also needed to establish $H^2$ well-posedness of the GPE \cref{NLSE} with spatially low regularity potential, where one shall differentiate the equation once in time rather than twice in space, which thus requires $V$ to be at least once differentiable in time (see Chapter 4.8 of \cite{cazenave2003} or \cite{kato1987}). In this aspect, when $V$ is of low regularity in space (e.g., $V(\cdot, t) \in L^\infty(\Omega)$), for \cref{A2} to hold, it seems the assumption \cref{A1} is nearly optimal and cannot be essentially relaxed. 
	\end{remark}
	
	For simplicity of the presentation, we define a constant
	\begin{equation*}
		M := \max \left\{ \| V \|_{L^\infty([0, T]; L^\infty(\Omega))}, \| \partial_t V \|_{L^\infty([0, T]; L^2(\Omega))}, \| \psi \|_{L^\infty([0, T]; H^m(\Omega))}, \| \partial_t \psi \|_{L^\infty([0, T]; L^2(\Omega))} \right\}. 
	\end{equation*}
	Moreover, we set 
	\begin{equation}
		f(\rho) = \beta \rho, \quad \rho \geq 0,   
	\end{equation}
	and define an operator
	\begin{equation}
		\F(v)(x) = \F(v(x)) := f(|v(x)|^2)v(x), \quad v \in L^2(\Omega). 
	\end{equation}
	Also, we shall abbreviate $\psi(\cdot, t)$ as $\psi(t)$ and $V(\cdot, t)$ as $V(t)$. 
	
	To carry out the error analysis, we define the numerical integrator as 
	\begin{align}
		{\Phi}_{B}^{t_n, \tau} \left( \phi \right)
		&:= P_N \left[ \left( 1 - i \tau V(t_n)  \right) I_N \left( \phi e^{- i \tau f(|\phi|^2)} \right) \right] \notag \\
		&= I_N \left( \phi e^{- i \tau f(|\phi|^2)} \right) - i \tau P_N \left[ V (t_n)  I_N \left( \phi e^{- i \tau f(|\phi|^2)} \right) \right], \quad \phi \in X_N.  \label{eq:phi_B_def}
	\end{align}
	Then, for $\psihn{n} \ (0 \leq n \leq T/\tau)$ obtained from \cref{LTEFP_scheme}, we have
	\begin{equation}\label{eq:phi_def}
		I_N \psihn{n+1} = \Phi^{t_n, \tau}(I_N \psihn{n}) := e^{i\tau\Delta} {\Phi}_{B}^{t_n, \tau} \left( I_N \psihn{n} \right), \quad 0 \leq n \leq T/\tau-1.   
	\end{equation}
	
	We start with the estimate of the local truncation error $\mathcal{E}^n$ defined as
	\begin{equation}\label{eq:En_def}
		\mathcal{E}^n = P_N \psi(t_{n+1}) - \Phi^{t_n, \tau}(P_N \psi(t_n)), \quad 0 \leq n \leq T/\tau-1. 
	\end{equation}
	\begin{lemma}[Lemma 3.1 of \cite{bao2023_eFP}]\label{lem:Hm_bound}
		Let $\phi \in X_N$ and $0 < \tau < 1$. For $m \geq 2$, we have
		\begin{equation*}
			\| \phi (e^{- i \tau f(|\phi|^2)} - 1) \|_{H^m} \leq C(\| \phi \|_{H^m}) \tau.
		\end{equation*}
	\end{lemma}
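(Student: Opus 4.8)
The plan is to extract the factor $\tau$ through an integral representation of the exponential, and then control the remaining $H^m$-norms using the Banach-algebra property of $H^m(\Omega)$ (valid for $m\ge 1$ in one dimension) together with a Moser-type composition estimate. Writing $w := \tau f(|\phi|^2) = \tau\beta|\phi|^2$, I would first invoke the elementary identity $e^{-iw}-1 = -iw\int_0^1 e^{-isw}\,\rmd s$, which holds for every real $w$ (and trivially at $w=0$), to obtain
\[
\phi\left(e^{-i\tau f(|\phi|^2)}-1\right) = -i\tau\,\F(\phi)\int_0^1 e^{-i s\tau f(|\phi|^2)}\,\rmd s,
\]
where $\F(\phi)=\beta|\phi|^2\phi$. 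This already displays the desired power of $\tau$ explicitly, so the task reduces to bounding the $H^m$-norm of the product on the right by a constant depending only on $\|\phi\|_{H^m}$, uniformly in $s\in[0,1]$ and $\tau\in(0,1)$.

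Next I would apply Minkowski's integral inequality to move the $H^m$-norm inside the $s$-integral, followed by the algebra property to split the product, giving
\[
\left\|\phi\left(e^{-i\tau f(|\phi|^2)}-1\right)\right\|_{H^m} \le C\,\tau\,\|\F(\phi)\|_{H^m}\sup_{s\in[0,1]}\left\|e^{-is\tau f(|\phi|^2)}\right\|_{H^m}.
\]
The factor $\|\F(\phi)\|_{H^m}=\|\beta|\phi|^2\phi\|_{H^m}$ is bounded by $C|\beta|\,\|\phi\|_{H^m}^3$, again by the algebra property applied to the triple product $\phi\,\overline{\phi}\,\phi$ (noting $\|\overline{\phi}\|_{H^m}=\|\phi\|_{H^m}$). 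Thus everything now hinges on the uniform bound $\|e^{-is\tau f(|\phi|^2)}\|_{H^m}\le C(\|\phi\|_{H^m})$.

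The crux, and the step I expect to be the main obstacle, is precisely this composition estimate. I would first record that $\|\phi\|_{L^\infty}\le C\|\phi\|_{H^1}\le C\|\phi\|_{H^m}$ by the one-dimensional Sobolev embedding, so that the real phase $g:=s\tau f(|\phi|^2)$ satisfies $\|g\|_{L^\infty}\le C(\|\phi\|_{H^m})$ uniformly in $s,\tau$ since $s\tau<1$. Differentiating via the Fa\`a di Bruno formula, $\partial_x^m e^{-ig}$ is a finite sum of terms of the form $(\text{bounded constant})\cdot e^{-ig}\prod_{j}\partial_x^{k_j} g$ with $\sum_j k_j = m$; because $|e^{-ig}|=1$ and $g$ is a fixed multiple of $|\phi|^2\in H^m\cap L^\infty$, each such product is estimated in $L^2$ by the Gagliardo--Nirenberg (Moser) product inequality, yielding a bound of the form $C(\|\phi\|_{L^\infty})\,\||\phi|^2\|_{H^m}\le C(\|\phi\|_{H^m})$. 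The delicate point is to verify that the constant generated by Fa\`a di Bruno and the interpolation inequalities depends on $\phi$ \emph{only} through $\|\phi\|_{H^m}$ and stays uniform both as $\tau\to 0$ and as $N\to\infty$ (i.e.\ independent of which $\phi\in X_N$ is chosen); once the Moser estimate is invoked in its tame form, combining it with the algebra bounds above yields the claimed inequality $\lesssim C(\|\phi\|_{H^m})\tau$, and the remaining manipulations are routine.
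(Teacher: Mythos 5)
First, a point of reference: this paper never proves the lemma itself---it is imported verbatim as Lemma~3.1 of \cite{bao2023_eFP}---so there is no in-paper proof to compare against, and your argument must be judged as a self-contained proof. As such, it is correct. The identity $e^{-iw}-1=-iw\int_0^1 e^{-isw}\,\rmd s$ legitimately extracts the factor $\tau$; Minkowski's inequality and the algebra property of $H^m(\Omega)$ (valid here since $m\ge 2>d/2$, and already for $m\ge 1$ in 1D) reduce the problem to the uniform composition bound $\sup_{0\le s\le 1}\|e^{-is\tau f(|\phi|^2)}\|_{H^m}\le C(\|\phi\|_{H^m})$; and the Fa\`a di Bruno/Moser tame estimate you invoke for that bound is standard, with constants depending only on $m$ and $\Omega$ and on $\phi$ only through $\|\phi\|_{L^\infty}\lesssim\|\phi\|_{H^m}$ and $\||\phi|^2\|_{H^m}\lesssim\|\phi\|_{H^m}^2$---nothing in the chain sees $N$, and $\tau$ enters only through $s\tau\le 1$, so the uniformity you flag as the delicate point does hold. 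That said, you can bypass the Fa\`a di Bruno machinery entirely in this setting: since $f(\rho)=\beta\rho$, the phase is a polynomial in $\phi$ and $\overline{\phi}$, so one may expand $e^{-i\tau\beta|\phi|^2}-1=\sum_{k\ge1}(-i\tau\beta|\phi|^2)^k/k!$ and apply the algebra property termwise, $\||\phi|^{2k}\phi\|_{H^m}\le C_m^{2k}\|\phi\|_{H^m}^{2k+1}$ with $C_m$ the algebra constant, which gives
\begin{equation*}
\left\|\phi\left(e^{-i\tau f(|\phi|^2)}-1\right)\right\|_{H^m}\le \tau\,|\beta|\,C_m^2\,\|\phi\|_{H^m}^{3}\,e^{|\beta|C_m^{2}\|\phi\|_{H^m}^{2}}, \qquad 0<\tau<1,
\end{equation*}
using $e^{\tau a}-1\le \tau a e^{a}$ for $\tau\le 1$; this is the most elementary route for the cubic nonlinearity fixed in \cref{sec:3}. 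Your integral-representation argument costs the composition estimate but buys generality: it survives unchanged for any sufficiently smooth non-polynomial $f$, whereas the power-series shortcut does not. One cosmetic remark: note that $\overline{\phi}\notin X_N$ in general (the frequency set $\mathcal{T}_N$ is asymmetric), but this is harmless since you only use $\|\overline{\phi}\|_{H^m}=\|\phi\|_{H^m}$, not membership in $X_N$.
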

	
	\begin{proposition}[Error decomposition]\label{prop:local_error_decomp}
		Under the assumptions \cref{A1,A2}, we have
		\begin{equation}
			\mathcal{E}^n = \mathcal{E}_\text{\rm dom}^n + \mathcal{E}_2^n, \quad 0 \leq n \leq T/\tau-1, 
		\end{equation}
		where $\| \mathcal{E}^n_2 \|_{L^2} \lesssim \tau^2 + \tau h^m$ and
		\begin{equation}
			\mathcal{E}_\text{\rm dom}^n = (-\Delta) e^{i\tau\Delta} \int_0^\tau s\int_0^1 e^{-i \theta s \Delta} \rmd \theta P_N \left(V(t_n+s) \psi(t_n + s) + \F(\psi(t_n+s))\right) \rmd s. 
		\end{equation}
	\end{proposition}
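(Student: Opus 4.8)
The plan is to compare the exact one-step evolution, written through Duhamel's formula, with the numerical flow $\Phi^{t_n,\tau}$, and to peel off the single term whose size is governed by the (non-existent) spatial smoothness of $V\psi$. First I would write
\[
\psi(t_{n+1}) = e^{i\tau\Delta}\psi(t_n) - i\int_0^\tau e^{i(\tau-s)\Delta}\left(V(t_n+s)\psi(t_n+s) + \F(\psi(t_n+s))\right)\rmd s,
\]
which is legitimate because the integrand lies in $L^2$ under \cref{A1,A2} ($V\in L^\infty$, $\psi\in H^m\hookrightarrow L^\infty$). Applying $P_N$, which commutes with every Fourier multiplier $e^{i\sigma\Delta}$, moves the projection inside the integral.

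Next I would isolate $\mathcal{E}_{\mathrm{dom}}^n$ using the exact operator identity $e^{-is\Delta} = 1 - is\Delta\int_0^1 e^{-i\theta s\Delta}\,\rmd\theta$. Writing $e^{i(\tau-s)\Delta} = e^{i\tau\Delta}e^{-is\Delta}$ and substituting, the Duhamel integral splits into a principal part $-ie^{i\tau\Delta}\int_0^\tau P_N(\cdots)\,\rmd s$ and exactly the claimed $\mathcal{E}_{\mathrm{dom}}^n = (-\Delta)e^{i\tau\Delta}\int_0^\tau s\int_0^1 e^{-i\theta s\Delta}\,\rmd\theta\, P_N(\cdots)\,\rmd s$, the sign coming from $(-i)(-i)=-1$ and from pulling the commuting $\Delta$ to the front. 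Since $e^{i\tau\Delta}$ is an $L^2$-isometry, bounding $\mathcal{E}_2^n = \mathcal{E}^n - \mathcal{E}_{\mathrm{dom}}^n$ reduces to estimating, in $L^2$, the bracket
\[
P_N\psi(t_n) - i\int_0^\tau P_N\left(V(t_n+s)\psi(t_n+s) + \F(\psi(t_n+s))\right)\rmd s - \Phi_B^{t_n,\tau}(P_N\psi(t_n)).
\]

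Writing $\phi = P_N\psi(t_n)\in X_N$, I would expand the nonlinear sub-flow as $\phi\, e^{-i\tau f(|\phi|^2)} = \phi - i\tau\F(\phi) + r_\phi$ and, using $I_N\phi=\phi$, rewrite $\Phi_B^{t_n,\tau}(\phi)$ so that its $O(\tau)$ terms line up with the principal integral. The residual then splits into three groups, each to be bounded by $\tau^2 + \tau h^m$: (i) the second-order exponential remainder $-I_N r_\phi$, controlled by a second-order analogue of \cref{lem:Hm_bound} together with the boundedness of $I_N$ from $H^m$ into $L^2$ (valid since $m\geq 2>1/2$); (ii) the nonlinear discrepancy $\tau I_N\F(\phi) - \int_0^\tau P_N\F(\psi(t_n+s))\,\rmd s$, handled by freezing time (so $\int_0^\tau = \tau(\cdot)|_{t_n} + O(\tau^2)$ via $C^1$-in-time regularity and the $H^m$-algebra Lipschitz bound on $\F$), by the aliasing estimate $\|(I_N-P_N)\F(\phi)\|_{L^2}\lesssim h^m$, and by the projection error $\|\psi(t_n)-P_N\psi(t_n)\|_{L^2}\lesssim h^m$; and (iii) the potential discrepancy $\tau P_N(V(t_n)I_N(\cdots)) - \int_0^\tau P_N(V(t_n+s)\psi(t_n+s))\,\rmd s$, treated in the same way but pairing $V$ with $L^2$ factors throughout.

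The main obstacle is group (iii), and it is what fixes the regularity budget in \cref{A1}. Because $V(\cdot,t)$ carries no spatial smoothness, I can never place a derivative on it; every estimate must take the form $\|Vw\|_{L^2}\leq\|V\|_{L^\infty}\|w\|_{L^2}$ and must exploit $H^m\hookrightarrow L^\infty$ to put the wave function in $L^\infty$. The identity $\int_0^\tau V(t_n+s)\psi(t_n+s)\,\rmd s = \tau\,V(t_n)\psi(t_n) + O(\tau^2)$ is precisely where the $C^1([0,T];L^2)$ regularity of $V$ enters, through $\|V(t_n+s)-V(t_n)\|_{L^2}\lesssim s$, and no weaker hypothesis appears to suffice. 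Finally, I emphasize that $\mathcal{E}_{\mathrm{dom}}^n$ is deliberately left unestimated here: the $(-\Delta)P_N$ in front costs a factor $h^{-2}$ against the merely-$L^2$ regularity of $V\psi$, so even under the CFL restriction $\tau\leq h^2/\pi$ its contribution is only $O(\tau)$ per step; it can be tamed solely through the summation cancellation of the RCO argument carried out later, which is exactly why it must be separated out at this stage.
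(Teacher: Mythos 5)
Your proposal is correct, and while your extraction of the dominant term is identical to the paper's (the paper's $e^n$ built from $e^{-is\Delta}-I$ is exactly your identity $e^{-is\Delta}=1-is\Delta\int_0^1 e^{-i\theta s\Delta}\,\rmd\theta$, with the same sign bookkeeping), your treatment of $\mathcal{E}_2^n$ takes a genuinely different route. The paper inserts the intermediate function $W^{n+1}=e^{i\tau\Delta}\widetilde{\Phi}_B^{t_n,\tau}(P_N\psi(t_n))$ of \cref{eq:W_def} --- the \emph{exact} flow of the potential--nonlinearity subproblem --- and Taylor-expands the full phase $e^{-i\tau\Gamma^n}$ with $\Gamma^n$ from \cref{eq:gamma_def}; this yields the residuals $r_1^n,r_2^n,r_3^n$ plus the gap $P_NW^{n+1}-\Phi^{t_n,\tau}(P_N\psi(t_n))$, whose estimate needs \cref{lem:Hm_bound} for the interpolation error and the comparison \cref{est_2} of $e^{-i\int_0^\tau V(t_n+s)\,\rmd s}$ with $1-i\tau V(t_n)$ (where an $L^4$ bound on $V$ appears). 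You bypass the intermediate flow entirely: since both the Duhamel integrand and the Lawson scheme are \emph{linear} in $V$, you never exponentiate the potential, so nothing like \cref{est_2} or the $r_3^n$ remainder arises; you expand only the nonlinear phase $\phi e^{-i\tau f(|\phi|^2)}=\phi-i\tau\F(\phi)+r_\phi$ and match the two sides term by term, which gives a shorter ledger and makes transparent that $\partial_t V\in L^\infty([0,T];L^2)$ enters exactly in freezing $\int_0^\tau V(t_n+s)\psi(t_n+s)\,\rmd s$ at $t_n$ (in the paper it enters inside the exponential comparison instead). What the paper's route buys is that its exponential estimates are already available (\cref{lem:Hm_bound} is quoted from \cite{bao2023_eFP}); what yours requires is the one ingredient not in the paper, the ``second-order analogue of \cref{lem:Hm_bound}'' for $r_\phi$ --- you should either prove it (it is easy: derivatives of $e^{-i\tau F}-1+i\tau F$ with $F=f(|\phi|^2)$ all carry a factor $\tau^2$) or, more simply, avoid $H^m$ altogether by noting $|e^{-i\theta}-1+i\theta|\le\theta^2/2$ gives $\|r_\phi\|_{L^\infty}\lesssim\tau^2$ while $\|I_N u\|_{L^2}\le\sqrt{b-a}\,\|u\|_{L^\infty}$, so $\|I_N r_\phi\|_{L^2}\lesssim\tau^2$ directly. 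Your closing observation --- that $\mathcal{E}_{\rm dom}^n$ is only $O(\tau)$ per step even under $\tau\le h^2/\pi$ and must be deferred to the summation-by-parts cancellation --- matches precisely how the paper later uses \cref{eq:S_est,eq:summation_by_parts} in the proof of \cref{thm:main}.
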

	
	\begin{proof}
		By Duhamel's formula, we have
		\begin{align}\label{eq:exact_solution}
			\psi(t_{n+1}) 
			&= e^{i \tau \Delta} \psi(t_n) - i \int_0^\tau e^{i (\tau - s)\Delta} \left(V(t_n+s) \psi(t_n + s) + \F(\psi(t_n+s))\right) \rmd s \notag \\
			&= e^{i \tau \Delta} \psi(t_n) - i e^{i\tau\Delta} \int_0^\tau \left(V(t_n+s) \psi(t_n) + \F(\psi(t_n))\right) \rmd s \notag \\
			&\quad - i e^{i\tau\Delta} \int_0^\tau \left[ V(t_n+s) (\psi(t_n + s) - \psi(t_n)) + (\F(\psi(t_n+s)) - \F(\psi(t_n))) \right] \rmd s \notag \\
			&\quad - i e^{i\tau\Delta} \int_0^\tau (e^{-is\Delta}-I) \left(V(t_n+s) \psi(t_n + s) + \F(\psi(t_n+s))\right) \rmd s. 
		\end{align}
		Recalling \cref{eq:phi_B_def,eq:phi_def}, we get
		\begin{equation}\label{eq:numerical_solution}
			\Phi^{t_n, \tau}(P_N \psi(t_n)) 
			= e^{i\tau\Delta} P_N \left[ \left( 1 - i \tau V(t_n)  \right) I_N \left( \left(P_N \psi(t_n)\right) e^{- i \tau f(|P_N \psi(t_n)|^2)} \right) \right]. 
		\end{equation}
		To measure the error between \cref{eq:exact_solution} and \cref{eq:numerical_solution}, we define an intermediate function
		\begin{equation}\label{eq:W_def}
			W^{n+1} := e^{i\tau\Delta} \widetilde{\Phi}_{B}^{t_n, \tau}(P_N \psi(t_n)), 
		\end{equation}
		where $\widetilde{\Phi}_{B}^{t_n, \tau}$ is defined in \cref{eq:phi_B_tilde_def}. Moreover, define 
		\begin{equation}\label{eq:gamma_def}
			\Gamma^n := \frac{1}{\tau} \int_0^\tau V(t_n + s) \rmd s + f(|P_N \psi(t_n)|^2). 
		\end{equation}
		From \cref{eq:W_def}, recalling \cref{eq:phi_B_tilde_def}, by Taylor's expansion, we obtain
		\begin{align}\label{eq:W}
			W^{n+1} 
			&= e^{i\tau\Delta} P_N \psi(t_n) - i\tau e^{i\tau\Delta} \Gamma^n P_N \psi(t_n) \notag \\
			&\quad -i \tau^2 e^{i\tau\Delta} \int_0^1 (1-\theta) e^{- i \theta \tau \Gamma^n} \rmd \theta \left( \Gamma^n \right)^2 P_N \psi(t_n) . 
		\end{align}
		Subtracting \cref{eq:exact_solution} from \cref{eq:W} and applying $P_N$ on both sides, we have
		\begin{equation}\label{eq:r_def}
			P_N \psi(t_{n+1}) - P_N W^{n+1} = e^n + r^n_1 + r^n_2 + r^n_3, 
		\end{equation}
		where
		\begin{align}
			&e^n = - i e^{i\tau\Delta} P_N \int_0^\tau (e^{-is\Delta}-I) \left(V(t_n+s) \psi(t_n + s) + \F(\psi(t_n+s))\right) \rmd s, \\
			&r^n_1 = - i e^{i\tau\Delta} P_N \int_0^\tau \left(V(t_n+s) \psi(t_n) + \F(\psi(t_n))\right) \rmd s + i\tau e^{i\tau\Delta} P_N \left( \Gamma^n P_N \psi(t_n) \right), \\
			&r^n_2 = - i e^{i\tau\Delta} P_N \int_0^\tau \left[ V(t_n+s) (\psi(t_n + s) - \psi(t_n)) + (\F(\psi(t_n+s)) - \F(\psi(t_n))) \right] \rmd s, \\
			&r^n_3 = -i \tau^2 e^{i\tau\Delta} P_N \int_0^1 (1-\theta) e^{- i \theta \tau \Gamma^n} \rmd \theta \left( \Gamma^n \right)^2 P_N \psi(t_n). 
		\end{align}
		Recalling \cref{eq:gamma_def}, by the boundedness of $e^{i t \Delta}$ and $P_N$ and the standard projection error estimates of $P_N$, we have 
		\begin{align}
			\| r^n_1 \|_{L^2} 
			&\leq \left \| \int_0^\tau \left(V(t_n+s) (\psi(t_n) - P_N \psi(t_n) \right) \rmd s - \tau (\F(\psi(t_n)) - \F(P_N \psi(t_n))) \right \|_{L^2} \notag \\
			&\leq C(M) \tau \| \psi(t_n) - P_N \psi(t_n) \|_{L^2} \lesssim \tau h^m. \label{r1_est}
		\end{align}
		For $r^n_2$, we have, by the boundedness of $e^{i t \Delta}$ and $P_N$, 
		\begin{equation}\label{r2_est}
			\| r^n_2 \|_{L^2} \leq C(M) \tau \| \psi(t_n + s) - \psi(t_n) \|_{L^2} \lesssim \tau^2. 
		\end{equation}
		For $r^n_3$, noting that $\| P_N \psi(t_n) \|_{L^\infty} \lesssim \| P_N \psi(t_n) \|_{H^2} \lesssim \| \psi(t_n) \|_{H^2} \leq C(M)$, we have
		\begin{equation}\label{r3_est}
			\| r^n_3 \|_{L^2} \leq \tau^2 \| \left( \Gamma^n \right)^2 P_N \psi(t_n) \|_{L^2} \lesssim \tau^2 \| \Gamma^n \|_{L^\infty}^2 \| P_N \psi(t_n) \|_{L^\infty} \lesssim \tau^2. 
		\end{equation}
		
		From \cref{eq:W_def,eq:numerical_solution}, recalling \cref{eq:phi_B_tilde_def}, we have
		\begin{align}
			&&P_N W^{n+1} - \Phi^{t_n, \tau}(P_N \psi(t_n)) 
			= e^{i\tau\Delta} P_N \left( e^{- i \int_0^\tau V(t_n+s) \rmd s} (P_N \psi(t_n)) e^{- i \tau f(|P_N \psi(t_n)|^2)} \right) \notag \\
			&&\quad - e^{i\tau\Delta} P_N \left[ \left( 1 - i \tau V(t_n) \right) I_N \left( \left(P_N \psi(t_n)\right) e^{- i \tau f(|P_N \psi(t_n)|^2)} \right) \right].   
		\end{align}
		It follows that, by the boundedness of $e^{it\Delta}$ and $P_N$, 
		\begin{align}\label{eq:est}
			&\| P_N W^{n+1} - \Phi^{t_n, \tau}(P_N \psi(t_n)) \|_{L^2} \notag \\
			&\leq \left\| e^{- i \int_0^\tau V(t_n+s) \rmd s} (I - I_N) \left[(P_N \psi(t_n)) e^{- i \tau f(|P_N \psi(t_n)|^2)} \right] \right\|_{L^2} \notag \\
			&\quad + \left \| \left( e^{- i \int_0^\tau V(t_n+s) \rmd s} - \left( 1 - i \tau V(t_n)  \right) \right) I_N \left( \left(P_N \psi(t_n)\right) e^{- i \tau f(|P_N \psi(t_n)|^2)} \right) \right \|_{L^2} \notag \\
			&\leq \left \| (I - I_N) \left( \left(P_N \psi(t_n)\right) e^{- i \tau f(|P_N \psi(t_n)|^2)} \right) \right \|_{L^2} \notag \\
			&\quad + \left\| e^{- i \int_0^\tau V(t_n+s) \rmd s} - \left( 1 - i \tau V(t_n)  \right) \right\|_{L^2} \left \| I_N \left( \left(P_N \psi(t_n)\right) e^{- i \tau f(|P_N \psi(t_n)|^2)} \right) \right\|_{L^\infty}. 
		\end{align}
		Noting that $I_N$ is an identity on $X_N$, one has, by letting $\phi = P_N \psi(t_n) \in X_N$ and using \cref{lem:Hm_bound},  
		\begin{align}
			&\left \| (I - I_N) \left( \left(P_N \psi(t_n)\right) e^{- i \tau f(|P_N \psi(t_n)|^2)} \right) \right \|_{L^2} \notag \\ 
			&= \left \| (I - I_N) \left( \phi \left(e^{- i \tau f(|\phi|^2)} - 1 \right) \right) \right \|_{L^2} \leq h^m \left \| \phi \left(e^{- i \tau f(|\phi|^2)} - 1 \right) \right \|_{H^m} \lesssim \tau h^m. \label{est_1}
		\end{align}
		By Taylor's expansion, we have
		\begin{align}
			&\left \| e^{- i \int_0^\tau V(t_n+s) \rmd s} - \left( 1 - i \tau V(t_n)  \right) \right \|_{L^2} \notag \\
			&\leq \int_0^\tau \left\| V(t_n+s) - V(t_n) \right \|_{L^2} \rmd s + \int_0^1 (1-\theta) \left \| \int_0^\tau V(t_n+s) \rmd s \right \|_{L^4}^2 \rmd \theta \lesssim \tau^2.  \label{est_2}
		\end{align}
		By Sobolev embedding of $H^2 \hookrightarrow L^\infty$, we have
		\begin{align}
			\left \| I_N \left( \left(P_N \psi(t_n)\right) e^{- i \tau f(|P_N \psi(t_n)|^2)} \right) \right\|_{L^\infty} 
			&\lesssim \left \| I_N \left( \left(P_N \psi(t_n)\right) e^{- i \tau f(|P_N \psi(t_n)|^2)} \right) \right\|_{H^2} \notag \\
			&\lesssim \left \| \left(P_N \psi(t_n)\right) e^{- i \tau f(|P_N \psi(t_n)|^2)} \right\|_{H^2} \leq C(M). \label{est_3}
		\end{align}
		Inserting \cref{est_1,est_2,est_3} into \cref{eq:est} yields
		\begin{equation}
			\| P_N W^{n+1} - \Phi^{t_n, \tau}(P_N \psi(t_n)) \|_{L^2} \lesssim \tau^2 + \tau h^m. 
		\end{equation}
		Recalling \cref{eq:En_def,eq:r_def}, we have
		\begin{equation}
			\mathcal{E}^n = e^n + r^n_1 +  r^n_2 + r^n_3 + P_N W^{n+1} - \Phi^{t_n, \tau}(P_N \psi(t_n)), 
		\end{equation}
		which completes the proof by noting $\mathcal{E}^n_\text{dom} = e^n$ and taking $\mathcal{E}^n_2 = r^n_1 +  r^n_2 + r^n_3 + P_N W^{n+1} - \Phi^{t_n, \tau}(P_N \psi(t_n))$. 
	\end{proof}
	
	To establish the $L^2$-stability estimate, we also need the following estimates whose proof is straightforward and thus is omitted. 
	\begin{lemma}\label{lem:diff_nonl}
		Let $z_1, z_2 \in \C$ such that $|z_1| \leq M_0$ and $|z_2| \leq M_0$. Then we have
		\begin{align}
			&\left| z_1 e^{- i \tau f(|z_1|^2)} - z_2 e^{- i \tau f(|z_2|^2)} \right| \leq (1+C(M_0)\tau) |z_1 - z_2|, \label{eq:diff_1} \quad \\
			&\left| z_1 (e^{- i \tau f(|z_1|^2)} - 1) - z_2 (e^{- i \tau f(|z_2|^2)} - 1) \right| \leq C(M_0) \tau |z_1 - z_2|. \label{eq:diff_2}
		\end{align}
	\end{lemma}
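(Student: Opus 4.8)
The plan is to prove both estimates by the same add-and-subtract (telescoping) device, reducing everything to two elementary scalar facts. Since $f(\rho)=\beta\rho$, the exponent $-\tau f(|z|^2)=-\tau\beta|z|^2$ is purely real, so I will rely on $|e^{ia}-e^{ib}|\le|a-b|$ and $|e^{ia}-1|\le|a|$ for $a,b\in\R$ (both following from $|e^{i\theta}-1|=2|\sin(\theta/2)|\le|\theta|$). Combined with the reverse triangle inequality $\big||z_1|-|z_2|\big|\le|z_1-z_2|$, these yield the key auxiliary bound
\[
	\big| e^{-i\tau\beta|z_1|^2} - e^{-i\tau\beta|z_2|^2} \big|
	\le \tau|\beta|\,\big||z_1|^2-|z_2|^2\big|
	\le \tau|\beta|\,(|z_1|+|z_2|)\,\big||z_1|-|z_2|\big|
	\le 2|\beta|M_0\tau\,|z_1-z_2|,
\]
which already carries the factor $\tau$ that drives both conclusions.

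For \eqref{eq:diff_1}, I would split $z_1 e^{-i\tau\beta|z_1|^2} - z_2 e^{-i\tau\beta|z_2|^2} = z_1\big(e^{-i\tau\beta|z_1|^2}-e^{-i\tau\beta|z_2|^2}\big) + (z_1-z_2)e^{-i\tau\beta|z_2|^2}$. The second term has modulus exactly $|z_1-z_2|$ since $|e^{-i\tau\beta|z_2|^2}|=1$, which supplies the leading constant $1$; the first term is bounded, using the auxiliary estimate together with $|z_1|\le M_0$, by $2|\beta|M_0^2\tau\,|z_1-z_2|$. Summing the two gives $(1+C(M_0)\tau)|z_1-z_2|$ with $C(M_0)=2|\beta|M_0^2$.

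For \eqref{eq:diff_2}, I would use the analogous splitting $z_1(e^{-i\tau\beta|z_1|^2}-1) - z_2(e^{-i\tau\beta|z_2|^2}-1) = z_1\big(e^{-i\tau\beta|z_1|^2}-e^{-i\tau\beta|z_2|^2}\big) + (z_1-z_2)\big(e^{-i\tau\beta|z_2|^2}-1\big)$. The first term is again controlled by the auxiliary bound as $2|\beta|M_0^2\tau\,|z_1-z_2|$; for the second term the smallness now comes from the exponential factor rather than from $z_1-z_2$, namely $|z_1-z_2|\cdot\big|e^{-i\tau\beta|z_2|^2}-1\big|\le|z_1-z_2|\cdot\tau|\beta||z_2|^2\le|\beta|M_0^2\tau\,|z_1-z_2|$. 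Adding yields the claimed bound with $C(M_0)=3|\beta|M_0^2$.

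There is no genuine obstacle here, which is why the authors omit the proof; the only point requiring a little care is to arrange the add-and-subtract so that every resulting term already contains a factor of $\tau$ (for \eqref{eq:diff_2}), or so that all but the single ``identity'' term does (for \eqref{eq:diff_1}). Both inequalities hold uniformly for $z_1,z_2$ in the disk of radius $M_0$, and the constant $C(M_0)$ depends only on $|\beta|$ and $M_0$, as claimed.
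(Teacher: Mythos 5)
Your proof is correct, and since the paper explicitly omits the proof of this lemma as straightforward, your add-and-subtract argument---using $|e^{ia}-e^{ib}|\le |a-b|$ for real $a,b$ together with $\bigl| |z_1|^2-|z_2|^2 \bigr| \le (|z_1|+|z_2|)\,|z_1-z_2| \le 2M_0|z_1-z_2|$---is precisely the standard route the authors have in mind. Your explicit constants ($C(M_0)=2|\beta|M_0^2$ for \cref{eq:diff_1} and $C(M_0)=3|\beta|M_0^2$ for \cref{eq:diff_2}) check out, as does your key organizational point that in \cref{eq:diff_2} the second term must draw its factor of $\tau$ from $\bigl|e^{-i\tau\beta|z_2|^2}-1\bigr|\le |\beta|M_0^2\tau$ rather than from $z_1-z_2$.
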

	
	
	\begin{proposition}[$L^\infty$-conditional $L^2$-stability estimate]\label{prop:stability}
		Let $v, w \in X_N$ such that $\| v \|_{L^\infty} \leq M_0$ and $\| w \|_{L^\infty} \leq M_0$. Then we have
		\begin{equation*}
			\left\| \left(\Phi_B^{t_n, \tau} - I\right)(v) - \left(\Phi_B^{t_n, \tau} - I\right)(w) \right\|_{L^2} \leq C(M_0, \| V \|_{L^\infty([0, T]; L^\infty)}) \tau \| v-w \|_{L^2}.   
		\end{equation*}
	\end{proposition}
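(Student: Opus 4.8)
The plan is to first rewrite the increment operator $\Phi_B^{t_n,\tau}-I$ so that the nonlinear phase and the potential enter additively, and then to route every term through the grid so that the pointwise estimates of \cref{lem:diff_nonl} can be applied. Since $v,w\in X_N$, we have $I_N v=v$ and $P_N v=v$ (likewise for $w$), so starting from the definition \cref{eq:phi_B_def} and using $\phi=P_N I_N\phi$ together with the linearity of $P_N$ and $I_N$, I would write, for $\phi\in\{v,w\}$,
\[
(\Phi_B^{t_n,\tau}-I)(\phi)=P_N I_N\!\left(\phi\bigl(e^{-i\tau f(|\phi|^2)}-1\bigr)\right)-i\tau\,P_N\!\left[V(t_n)\,I_N\!\left(\phi\,e^{-i\tau f(|\phi|^2)}\right)\right].
\]
Subtracting the two instances gives $(\Phi_B^{t_n,\tau}-I)(v)-(\Phi_B^{t_n,\tau}-I)(w)=P_N I_N g_1-i\tau\,P_N[V(t_n)I_N g_2]$, where $g_1:=v(e^{-i\tau f(|v|^2)}-1)-w(e^{-i\tau f(|w|^2)}-1)$ and $g_2:=v\,e^{-i\tau f(|v|^2)}-w\,e^{-i\tau f(|w|^2)}$. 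Because $P_N$ has operator norm $1$ on $L^2$ and $\|V(t_n)\|_{L^\infty}\le\|V\|_{L^\infty([0,T];L^\infty)}$, it suffices to bound $\|I_N g_1\|_{L^2}$ and $\|I_N g_2\|_{L^2}$.

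The key step—and the one requiring care—is that $I_N$ is \emph{not} bounded on $L^2(\Omega)$, so one must not estimate $\|I_N g_j\|_{L^2}$ by $\|g_j\|_{L^2}$. Instead I would use the discrete Parseval identity: orthogonality of the Fourier modes combined with the isometry of the discrete transform in \cref{eq:hat} yields, for any grid function $g$,
\[
\|I_N g\|_{L^2}^2=(b-a)\sum_{l\in\mathcal{T}_N}|\widetilde g_l|^2=h\sum_{j=0}^{N-1}|g(x_j)|^2.
\]
This converts the continuous $L^2$-norm of the interpolant into the discrete $\ell^2$-norm of the grid samples, at which point the pointwise bounds of \cref{lem:diff_nonl} apply with $M_0$ bounding $|v(x_j)|,|w(x_j)|$ (guaranteed by $\|v\|_{L^\infty},\|w\|_{L^\infty}\le M_0$). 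Applying \cref{eq:diff_2} to $g_1$ and \cref{eq:diff_1} to $g_2$ at each grid point gives $|g_1(x_j)|\le C(M_0)\tau\,|v(x_j)-w(x_j)|$ and $|g_2(x_j)|\le(1+C(M_0)\tau)\,|v(x_j)-w(x_j)|$.

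Finally I would exploit that $v-w\in X_N$, so $I_N(v-w)=v-w$ and the same identity gives $h\sum_{j=0}^{N-1}|v(x_j)-w(x_j)|^2=\|v-w\|_{L^2}^2$. Combining, $\|I_N g_1\|_{L^2}\le C(M_0)\tau\|v-w\|_{L^2}$ and $\|I_N g_2\|_{L^2}\le(1+C(M_0)\tau)\|v-w\|_{L^2}$, whence the first term contributes $C(M_0)\tau\|v-w\|_{L^2}$ and the second $\tau\|V\|_{L^\infty}(1+C(M_0)\tau)\|v-w\|_{L^2}$; for $\tau$ bounded these sum to the claimed estimate. The only genuine obstacle is the handling of $I_N$: recognizing that its $L^2$ action must pass through the discrete Parseval identity rather than a (false) $L^2$-boundedness, so that the truly pointwise Lipschitz-type estimates of \cref{lem:diff_nonl} can be brought to bear; everything else is bookkeeping.
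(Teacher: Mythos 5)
Your proof is correct and takes essentially the same route as the paper: the identical decomposition of the increment into $I_N g_1 - i\tau P_N\left[V(t_n) I_N g_2\right]$, the identity $\| I_N \phi \|_{L^2}^2 = h\sum_{j=0}^{N-1}|\phi(x_j)|^2$ to pass from the interpolant to grid values, and the pointwise bounds \cref{eq:diff_1,eq:diff_2} of \cref{lem:diff_nonl}. The paper states this argument only as a terse sketch; your write-up fills in exactly the intended details, including the key observation that $I_N$ must be handled through the discrete Parseval identity rather than $L^2$-boundedness (the residual $(1+C(M_0)\tau)$ factor is absorbed using $0<\tau<1$, as implicitly assumed in the paper).
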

	
	\begin{proof}
		Recalling \cref{eq:phi_B_def}, we have, for $u \in X_N$, 
		\begin{equation}
			\left(\Phi_B^{t_n, \tau} - I\right)(u) = I_N \left[ u \left(e^{- i \tau f(|u|^2)} - 1 \right) \right] - i \tau P_N \left[ V (t_n)  I_N \left( u e^{- i \tau f(|u|^2)} \right) \right]. 
		\end{equation}
		Noting that $ \| I_N \phi \|_{L^2}^2 = h\sum_{j=0}^{N-1} |\phi(x_j)|^2 $ for $\phi \in C_\text{per}(\overline{\Omega})$, by \cref{lem:diff_nonl} and the boundedness of $P_N$, we can obtain the desired estimate. 
	\end{proof}

	\begin{proof}[Proof of \cref{thm:main}]
		Let $ e^n = P_N \psi(t_n) - I_N \psi^n $ for $ 0 \leq n \leq T/\tau $. By standard projection error estimates of $P_N$, it suffices to establish the estimate of $ e^n $. For $  0 \leq n \leq T/\tau -1 $, we have
		\begin{align}\label{eq:error_eq_improv}
			e^{n+1} 
			&= P_N \psi(t_{n+1}) - I_N \psi^{n+1} \notag\\
			&= P_N \psi(t_{n+1}) - \Phi^{t_n, \tau}(P_N\psi(t_n)) + \Phi^{t_n, \tau}(P_N\psi(t_n)) - \Phi^{t_n, \tau}(I_N \psi^n) \notag \\
			&= e^{i \tau \Delta} e^n + Q^n + \mathcal{E}^n, 
		\end{align}
		where $\mathcal{E}^n$ is defined in \cref{eq:En_def} and 
		\begin{equation}\label{eq:Qn}
			Q^n = e^{i \tau \Delta} \left( \left(\Phi_B^{t_n, \tau}(P_N \psi(t_n)) - P_N \psi(t_n) \right) - \left(\Phi_B^{t_n, \tau}(I_N \psi^n) - I_N \psi^n \right)  \right). 
		\end{equation}
		Iterating \cref{eq:error_eq_improv}, we have
		\begin{equation}\label{eq:error_eq_iter}
			e^{n+1} = e^{i(n+1)\tau\Delta} e^0 + \sum_{k=0}^{n} e^{i (n-k)\tau \Delta} \left( Q^k + \mathcal{E}^k \right), \quad 0 \leq n \leq T/\tau-1. 
		\end{equation}
		For $ Q^n $ in \cref{eq:Qn}, applying \cref{prop:stability} and the isometry property of $e^{it\Delta}$, we have
		\begin{equation*}
			\| Q^n \|_{L^2} \leq C(\| \psi^n \|_{L^\infty}, M) \tau \| e^n \|_{L^2}, \quad 0 \leq n \leq T/\tau - 1, 
		\end{equation*}
		which, together with the isometry property of $e^{i t \Delta}$ and the triangle inequality, implies
		\begin{equation}\label{eq:est_Zn}
			\left \| \sum_{k=0}^{n} e^{i (n-k)\tau \Delta} Q^k \right \|_{L^2} \leq C_1 \tau \sum_{k=0}^{n} \| e^k \|_{L^2}, \quad 0 \leq n \leq T/\tau -1, 
		\end{equation}
		where the constant $C_1$ depends on $\max_{0 \leq k \leq n} \| \psi^k \|_{L^\infty}$ and $M$. 
		By \cref{prop:local_error_decomp} and the isometry property of $e^{i t \Delta}$, we have
		\begin{align}\label{eq:est_En}
			\left\| \sum_{k=0}^{n} e^{i (n-k)\tau \Delta} \mathcal{E}^k \right\|_{L^2} 
			&\lesssim n \tau (\tau + h^m) + \left\| e^{i n \tau \Delta} \mathcal{J}^n \right\|_{L^2}, \quad  0 \leq n \leq T/\tau - 1, 
		\end{align}
		where
		\begin{equation}\label{eq:Jn}
			\mathcal{J}^n := \sum_{k=0}^{n} e^{-ik\tau \Delta} \mathcal{E}_{\rm dom}^k, \quad  0 \leq n \leq T/\tau - 1. 
		\end{equation}
		From \cref{eq:error_eq_iter}, using \cref{eq:est_Zn,eq:est_En}, we have
		\begin{equation}\label{eq:error_eq_LT_L2}
			\| e^{n+1} \|_{L^2} \lesssim \tau + h^m + C_1\tau \sum_{k=0}^n \| e^k \|_{L^2} + \left\| \mathcal{J}^n \right\|_{L^2}, \quad 0 \leq n \leq T/\tau -1. 
		\end{equation}
		For $0 \leq n \leq T/\tau - 1$, set
		\begin{equation}\label{eq:phin}
			\phi^n = e^{i\tau\Delta} \int_0^\tau s\int_0^1 e^{-i \theta s \Delta} \rmd \theta P_N \left(V(t_n+s) \psi(t_n + s) + \F(\psi(t_n+s))\right) \rmd s \in X_N.
		\end{equation}
		By the boundedness of $e^{it\Delta}$ and $P_N$, we have
		\begin{equation}\label{eq:phin_est}
			\| \phi^n \|_{L^2} \leq \tau \int_0^\tau \| V(t_n+s) \psi(t_n + s) + \F(\psi(t_n+s)) \|_{L^2} \rmd s \lesssim \tau^2.  
		\end{equation}
		Define
		\begin{equation}\label{eq:S_def}
			S_{n, l} = \sum_{k=0}^n e^{ i k \tau \mu_l^2}, \qquad l \in \mathcal{T}_N, \quad 0 \leq n \leq T/\tau -1. 
		\end{equation}
		When $ \tau < h^2 / \pi $, recalling $|\sin(x)| \geq 2|x|/\pi$ when $ x \in [0, \pi/2]$, we have
		\begin{equation}\label{eq:S_est}
			|S_{n, l}| = \frac{|1-e^{i (n+1) \tau \mu_l^2}|}{|1-e^{i \tau \mu_l^2}|} \leq \frac{2}{2\sin(\tau \mu_l^2 /2)} \lesssim \frac{1}{\tau \mu_l^2}, \quad 0 \neq l \in \mathcal{T}_N. 
		\end{equation}
		Inserting \cref{eq:phin} into \cref{eq:Jn}, recalling $\mathcal{E}^n_\text{dom} = -\Delta \phi^n$, we have, for $0 \leq n \leq T/\tau-1$, 
		\begin{equation}\label{eq:Jn_phase}
			\widehat {(\mathcal{J}^n)}_{l} = \sum_{k=0}^n e^{ i k \tau \mu_l^2} \widehat{(\mathcal{E}^k_\text{\rm dom})}_l = \sum_{k=0}^n e^{ i k \tau \mu_l^2} \mu_l^2 \widehat{\phi^k_l}, \quad l \in \mathcal{T}_N. 
		\end{equation}
		From \cref{eq:Jn_phase}, using summation by parts and recalling \cref{eq:S_def}, we obtain, for $0 \leq n \leq T/\tau-1$,  
		\begin{equation}\label{eq:summation_by_parts}
			\widehat {(\mathcal{J}^n)}_{l} = S_{n, l} \mu_l^2 \widehat{\phi^n_l} - \sum_{k=0}^{n-1} S_{k, l} \mu_l^2 \left(\widehat{\phi^{k+1}_l} - \widehat{\phi^k_l}\right), \quad l \in \mathcal{T_N}.    
		\end{equation}
		From \cref{eq:summation_by_parts}, by \cref{eq:S_est}, we have, for $0 \leq n \leq T/\tau-1$, 
		\begin{equation}\label{eq:Jn_est}
			\left | \widehat {(\mathcal{J}^n)}_{l} \right | \lesssim \frac{1}{\tau} |\widehat \phi^n_l| + \frac{1}{\tau} \sum_{k=0}^{n-1} \left|\widehat{\phi^{k+1}_l} - \widehat{\phi^k_l} \right|, \quad l \in \mathcal{T}_N. 
		\end{equation}
		From \cref{eq:Jn}, using Parseval's identity, Cauchy inequality, \cref{eq:Jn_phase,eq:Jn_est}, we have, for $0 \leq n \leq T/\tau-1$, 
		\begin{align}\label{eq:est_J_LT_L2}
			\| \mathcal{J}^n \|_{L^2}^2 
			&=(b-a) \sum_{l \in \mathcal{T}_N} \left | \widehat {(\mathcal{J}^n)}_{l} \right |^2 
			\lesssim \frac{1}{\tau^2} \sum_{l \in \mathcal{T}_N} \left|\widehat{\phi^n_l} \right|^2 + \frac{n}{\tau^2} \sum_{k=0}^{n-1} \sum_{l \in \mathcal{T}_N} \left| \widehat{\phi^{k+1}_l} - \widehat{\phi^k_l} \right|^2 \notag\\
			&\lesssim \frac{1}{\tau^2} \| \phi^n \|_{L^2}^2 + \frac{n}{\tau^2} \sum_{k=0}^{n-1} \| \phi^{k+1} - \phi^{k} \|_{L^2}^2. 
		\end{align}
		Recalling \cref{eq:phin}, by the boundedness of $e^{it\Delta}$ and $P_N$, for $0 \leq n \leq T/\tau-2$, we have
		\begin{align}\label{eq:diff_phin_est}
			&\| \phi^{n+1} - \phi^{n} \|_{L^2} \notag \\
			&\leq \tau \int_0^\tau \big( \| V(t_{n+1}+s) \psi(t_{n+1} + s) - V(t_{n}+s) \psi(t_{n} + s) \|_{L^2} \\
			&\hphantom{\leq \tau \int_0^\tau \big(} + \| \F(\psi(t_{n+1}+s)) - \F(\psi(t_{n}+s)) \|_{L^2} \big ) \rmd s \notag \\
			&\leq \tau \int_0^\tau \int_0^\tau \big( \| \partial_\sigma (V(t_n+\sigma + s) \psi(t_n + \sigma + s)) \|_{L^2} \notag \\
			&\hphantom{\leq \tau \int_0^\tau \int_0^\tau \big(} + \| \partial_\sigma \F(\psi(t_{n}+\sigma+s)) \|_{L^2} \big) \rmd \sigma \rmd s \notag \\
			&\lesssim \tau^3 \left(\| \partial_t V \|_{L^\infty([t_n, t_{n+2}]; L^2)} \| \psi \|_{L^\infty([t_n, t_{n+2}]; L^\infty)} + \| V \|_{L^\infty([t_n, t_{n+2}]; L^\infty)} \| \partial_t \psi \|_{L^\infty([t_n, t_{n+2}]; L^2)} \right) \notag \\
			&\quad + \tau^3 \| \psi \|^2_{L^\infty([t_n, t_{n+2}]; L^\infty)} \| \partial_t \psi \|_{L^\infty([t_n, t_{n+2}]; L^2)} \leq C(M) \tau^3. 
		\end{align}
		From \cref{eq:est_J_LT_L2}, using \cref{eq:phin_est,eq:diff_phin_est}, we obtain
		\begin{equation}\label{eq:est_J_final}
			\| \mathcal{J}^n \|^2_{L^2} \lesssim \frac{\tau^4}{\tau^2} + \frac{n^2 \tau^6}{\tau^2} \lesssim \tau^2.   
		\end{equation}
		Inserting \cref{eq:est_J_final} into \cref{eq:error_eq_LT_L2}, we have
		\begin{equation}\label{eq:error_final_L2}
			\| e^{n+1} \|_{L^2} \lesssim \tau + h^m + C_1 \tau \sum_{k=0}^{n} \| e^k \|_{L^2}, 
		\end{equation}
		where $ C_1 $ depends on $M_2$ and $ \max_{0 \leq k \leq n} \| \psi^k \|_{L^\infty} $ and can be controlled by discrete Gronwall's inequality and the standard argument of mathematical induction with the inverse equality $ \| \phi \|_{L^\infty} \leq C_\text{inv} h^{-\frac{d}{2}} \| \phi \|_{L^2} $ for all $\phi  \in X_N $ \cite{book_spectral}. The details can be found \cite{bao2023_improved}. As a result, we obtain
		\begin{equation*}
			\| e^n \|_{L^2} \lesssim \tau + h^m, \quad 0 \leq n \leq T/\tau, 
		\end{equation*}
		which completes the proof of the optimal $L^2$-norm error bound in \cref{thm:main}. The $H^1$-norm error bound follows directly from the $L^2$-norm error bound by the inverse inequality $\| \phi \|_{H^1} \lesssim h^{-1} \| \phi \|_{L^2}$ for $\phi \in X_N$ and the step size restriction $\tau \lesssim h^2$. Thus the proof is completed. 
	\end{proof}
	
	\section{Numerical results}\label{sec:4}
	\subsection{Convergence rates of the LTSeFP method}
	In this section, we shall present some numerical results for different kinds of time-dependent spatially low regularity potential to test the convergence of the LTSeFP method in both time and space. In all the numerical experiments in this subsection, we fix $d=1$, $\Omega = (-16, 16)$, $T = 1$, $\beta = 1$, and choose a Gaussian initial datum
	\begin{equation*}
		\psi_0(x) = e^{-x^2/2}, \quad x \in \Omega. 
	\end{equation*} 
	
	We consider the following typical examples of low regularity potential: 
	\begin{enumerate}[label = (\roman*)]
		\item Static square-well potential: 
		\begin{equation}\label{eq:poten_0}
			V(x, t) = V_0(x), \quad V_0(x) = \left\{
			\begin{aligned}
				&0, &x \in (-4, 4), \\
				&10, &\text{otherwise}, 
			\end{aligned}, 
			\right.
			\qquad x \in \Omega. 
		\end{equation}
		
		\item Space time separable potential:
		\begin{equation}\label{eq:poten_1}
			V(x, t) = \cos(8\pi t) V_1(x), \quad V_1(x) = \left\{
			\begin{aligned}
				&10, &x \in (-2, 2), \\
				&0, &\text{otherwise}, 
			\end{aligned}
			\right.
			,\qquad x \in \Omega. 
		\end{equation} 
		
		
		\item Moving potential barrier: 
		\begin{equation}\label{eq:poten_2}
			V(x, t) = \left\{
			\begin{aligned}
				&0, &x \in (-4, -2-2\sin(8 \pi t)) \cup (2-2\sin(8 \pi t), 4), \\
				&10, &\text{otherwise}, 
			\end{aligned}
			\right.
			, \qquad x \in \Omega. 
		\end{equation}
		One notes that $V(x, t) = V_0(x) + V_1(x+2\sin(8 \pi t)) $ where $V_0$ is given in \cref{eq:poten_0} and $V_1$ is given in \cref{eq:poten_1}. 
		
	\end{enumerate}
	
	The exact solution is computed by the LTSeFP method with $h = h_\text{e}: = 2^{-9}$ and $\tau = \tau_\text{e}:= 10^{-7}$. Note that such choice of $\tau$ and $h$ satisfies $\tau \leq h^2/\pi$. 
	To quantify the error, we introduce the following error functions
	\begin{equation}
		e_{L^2}(t_n) = \| \psi(\cdot, t_n) - I_N \psihn{n} \|_{L^2}, \quad e_{H^1}(t_n) = \| \psi(\cdot, t_n) - I_N \psihn{n} \|_{H^1}. 
	\end{equation}
	
	In the following, we shall report the numerical results for the GPE with the above three potential functions (i) - (iii) to demonstrate the excellent performance of the LTSeFP method. In all the cases, we present convergence results in both time and space. For the temporal convergence, we plot the errors in $L^2$- and $H^1$-norms for different time step sizes $\tau$ under different choices of the mesh size $h$. To show the spatial convergence, we fix the time step size $\tau = \tau_\text{e}$ and compute the errors for different $h$. Moreover, for comparison purposes, we also present the result of using the standard Fourier pseudospectral method to discretize the semi-discrete LTS method \cref{eq:LT}, which is abbreviated as LTSFP method. Assuming that $\psihn{\langle n \rangle} \ (0 \leq n \leq T/\tau)$ is obtained from the LTSFP method, we define the corresponding error functions associated with the LTSFP method as
	\begin{equation}
		\widetilde e_{L^2}(t_n) = \| \psi(\cdot, t_n) - I_N \psihn{\langle n \rangle} \|_{L^2}, \quad \widetilde e_{H^1}(t_n) = \| \psi(\cdot, t_n) - \psihn{\langle n \rangle} \|_{H^1}. 
	\end{equation}
	
	We start with the static well potential \cref{eq:poten_0} and exhibit the temporal errors and spatial errors in \cref{fig:0_time,fig:0_space}, respectively. The markers ``$\ast$" in \cref{fig:0_time} are put at data points satisfying $ \tau \approx h^2/\pi$. We can observe that 
	\begin{itemize}
		\item The temporal convergence of the LTeFP method is first-order in $L^2$-norm and 0.75-order in $H^1$-norm only when the time step size condition $\tau \leq h^2/\pi$ is satisfied, and there is order reduction if $\tau \gg h^2$; 
		\item The spatial convergence of the LTeFP method is 2.5-order in $L^2$-norm and 1.5-order in $H^1$-norm, which are optimal with respect to the regularity of the exact solution which is roughly $H^{2.5}$. In contrast, the convergence rate of the standard Fourier pseudospectral method is only first order in both $L^2$- and $H^1$-norms. 
	\end{itemize}
	These observations validate our error estimates, and show the superiority of extended Fourier pseudospectral method compared to the standard Fourier pseudospectral method in terms of accuracy. In particular, they suggest the time step size restriction $\tau \leq h^2/\pi$ is necessary in practical applications. 
	
	\begin{figure}[htbp]
		\centering
		{\includegraphics[width=0.475\textwidth]{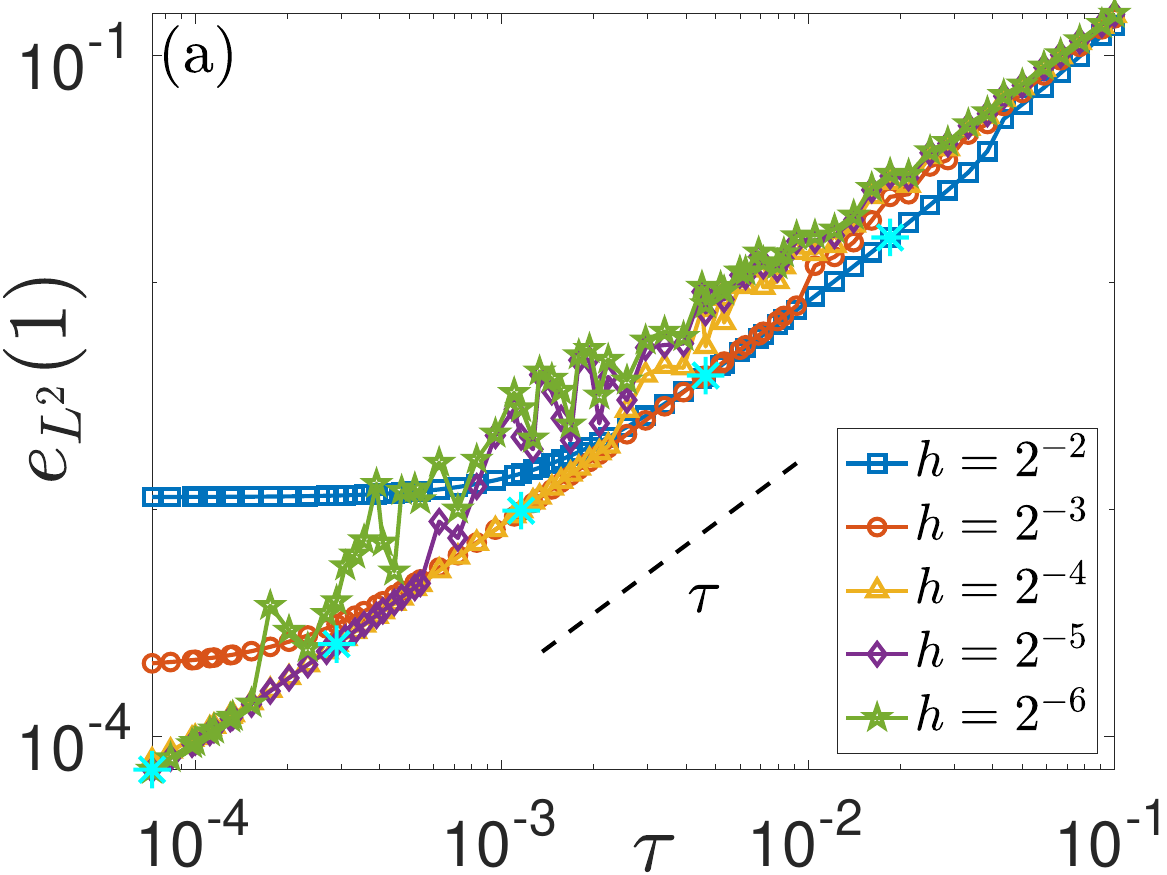}}\hspace{1em}
		{\includegraphics[width=0.475\textwidth]{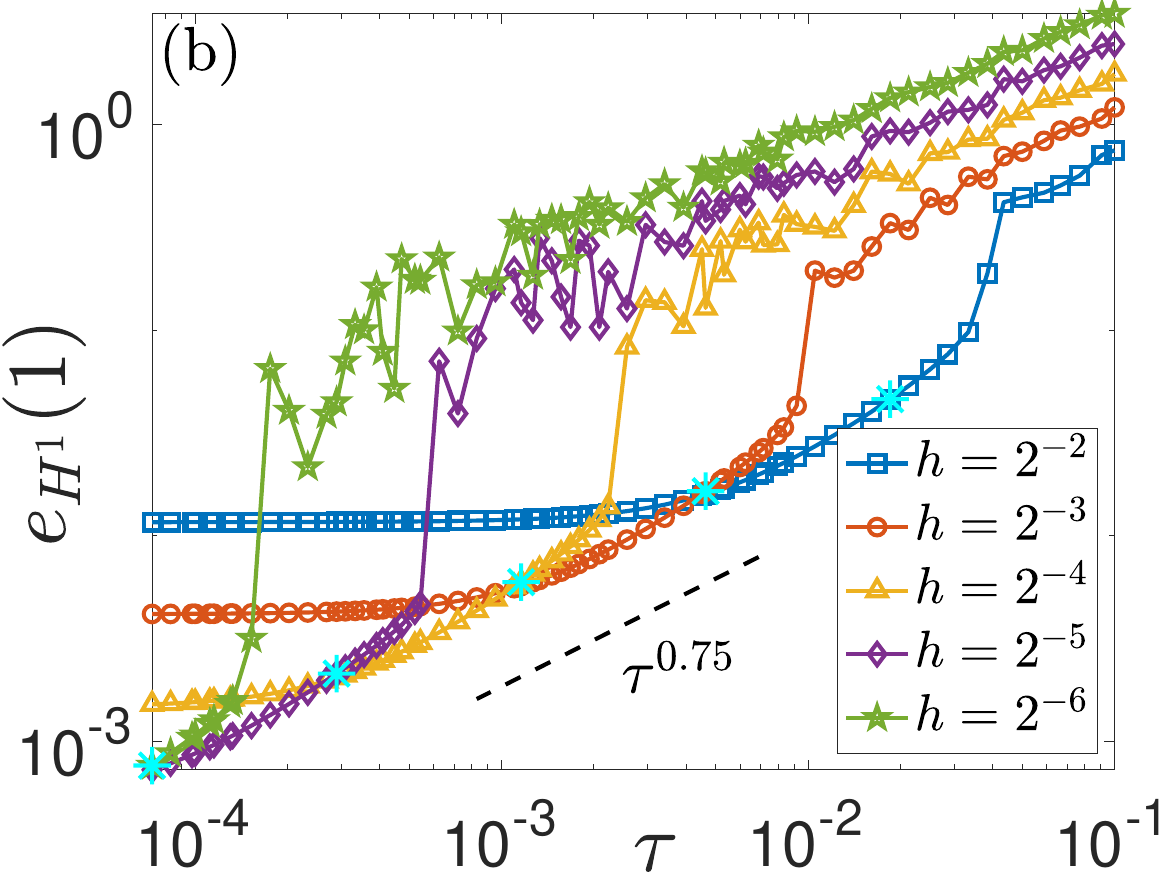}}
		\caption{Temporal errors of the LTSeFP method for the GPE \cref{NLSE} with low regularity potential \cref{eq:poten_0}: (a) $L^2$-norm errors and (b) $H^1$-norm errors }
		\label{fig:0_time}
	\end{figure}
	
	\begin{figure}[htbp]
		\centering
		{\includegraphics[width=0.475\textwidth]{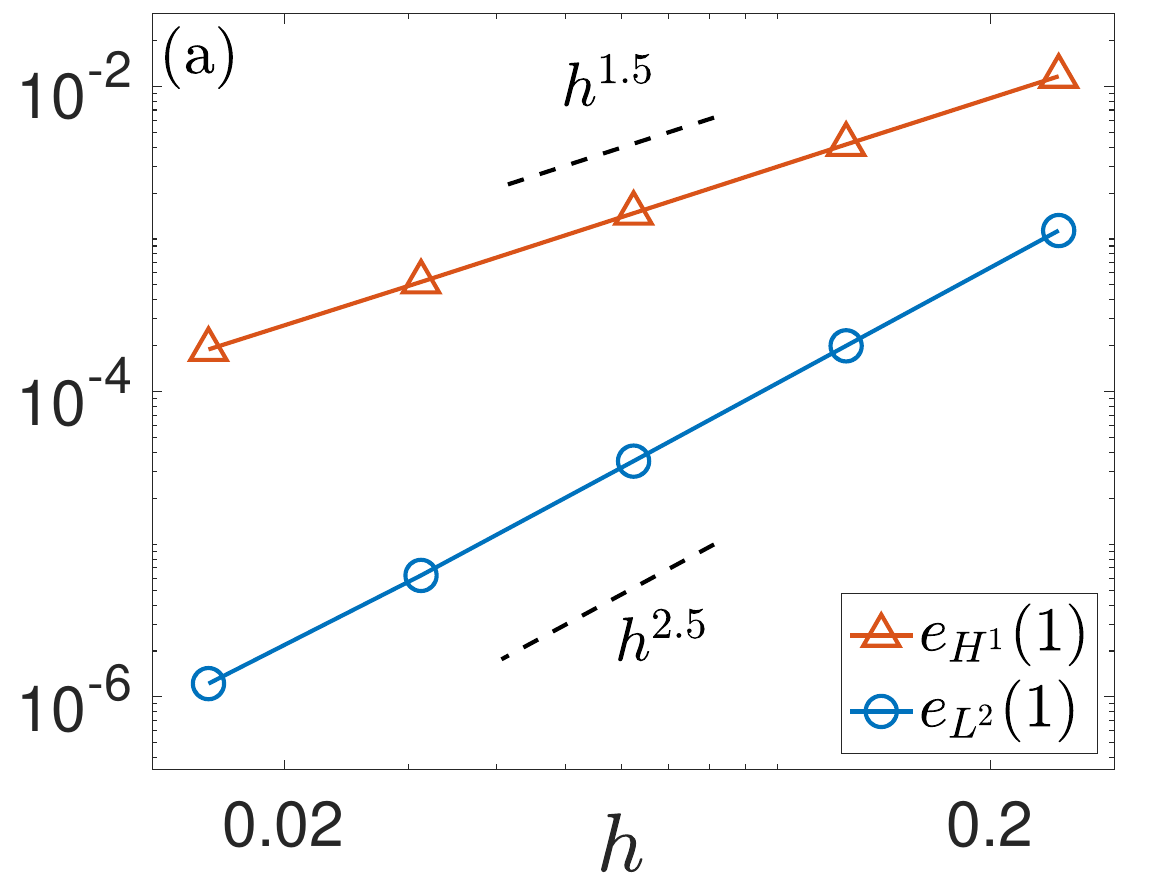}}\hspace{1em}
		{\includegraphics[width=0.475\textwidth]{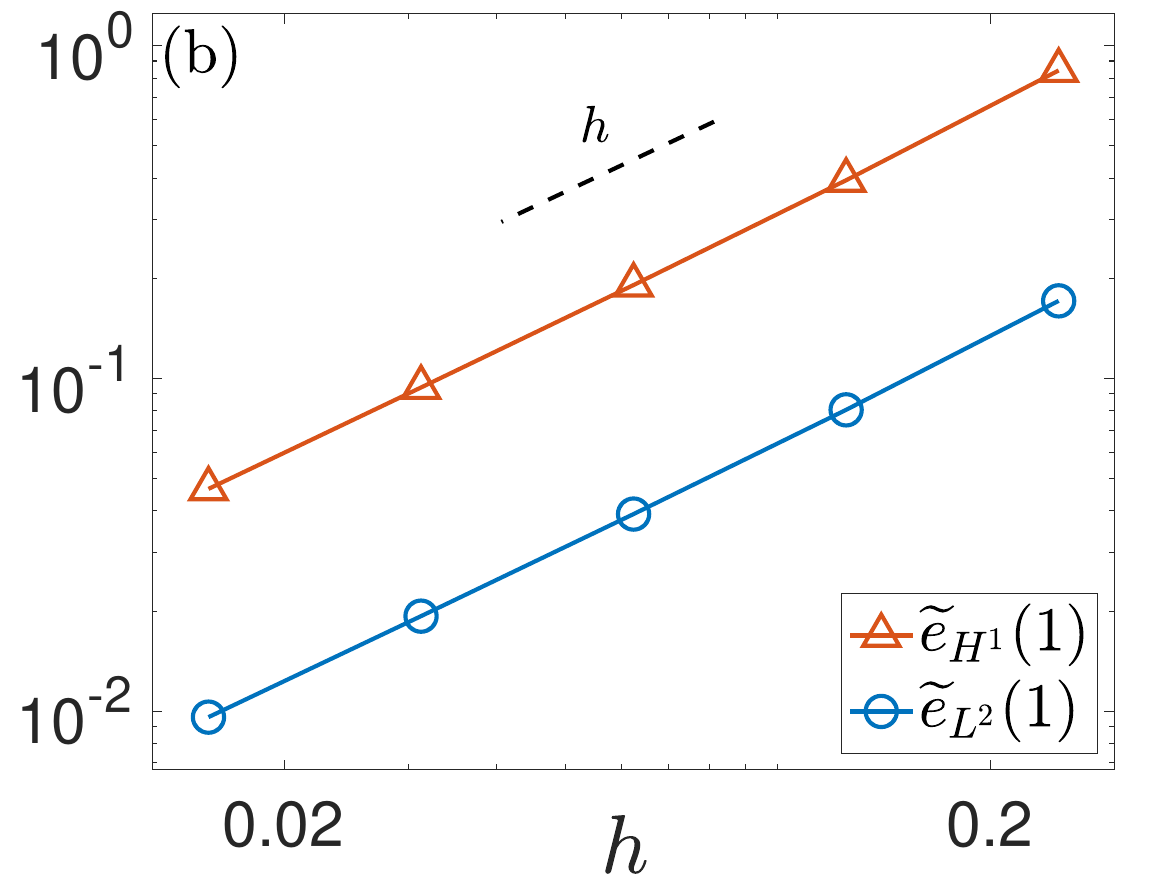}}
		\caption{Spatial errors of (a) LTSeFP method  and (b) LTSFP method for the GPE \cref{NLSE} with low regularity potential \cref{eq:poten_0}}
		\label{fig:0_space}
	\end{figure}
	
	We then consider the space-time separable potential \cref{eq:poten_1}. Similarly, in \cref{fig:1_time,fig:1_space}, we show the temporal errors and spatial errors of the LTSeFP method as well as the comparison with the LTSFP method. Again, the markers ``$\ast$" in \cref{fig:1_time} (b) are put at data points satisfying $ \tau = h^2/\pi$. We see that
	\begin{itemize}
		\item The temporal convergence of the LTeFP method is first-order in $L^2$- and $H^1$-norms when $\tau \leq h^2/\pi$. There is severe order reduction of temporal convergence in $H^1$-norm when $\tau \gg h^2$. 
		
		\item The spatial convergence of the LTeFP method is 2.5-order in $L^2$-norm and 1.5-order in $H^1$-norm, which is optimal with respect to the regularity of the exact solution as discussed in the previous example. Similarly, the LTSFP method is only first-order convergent in space in both $L^2$- and $H^1$-norms.  
	\end{itemize}
	The numerical results also confirm our error estimates \cref{thm:main}. We also note that the $L^2$-norm errors in time seem not very sensitive to the choice of the mesh size, which is different from the time-independent case above. However, from the $H^1$-norm errors in \cref{fig:1_time} (b), it is still necessary to choose $\tau \leq h^2/\pi$ for a good approximation to the exact solution. 
	
	\begin{figure}[htbp]
		\centering
		{\includegraphics[width=0.475\textwidth]{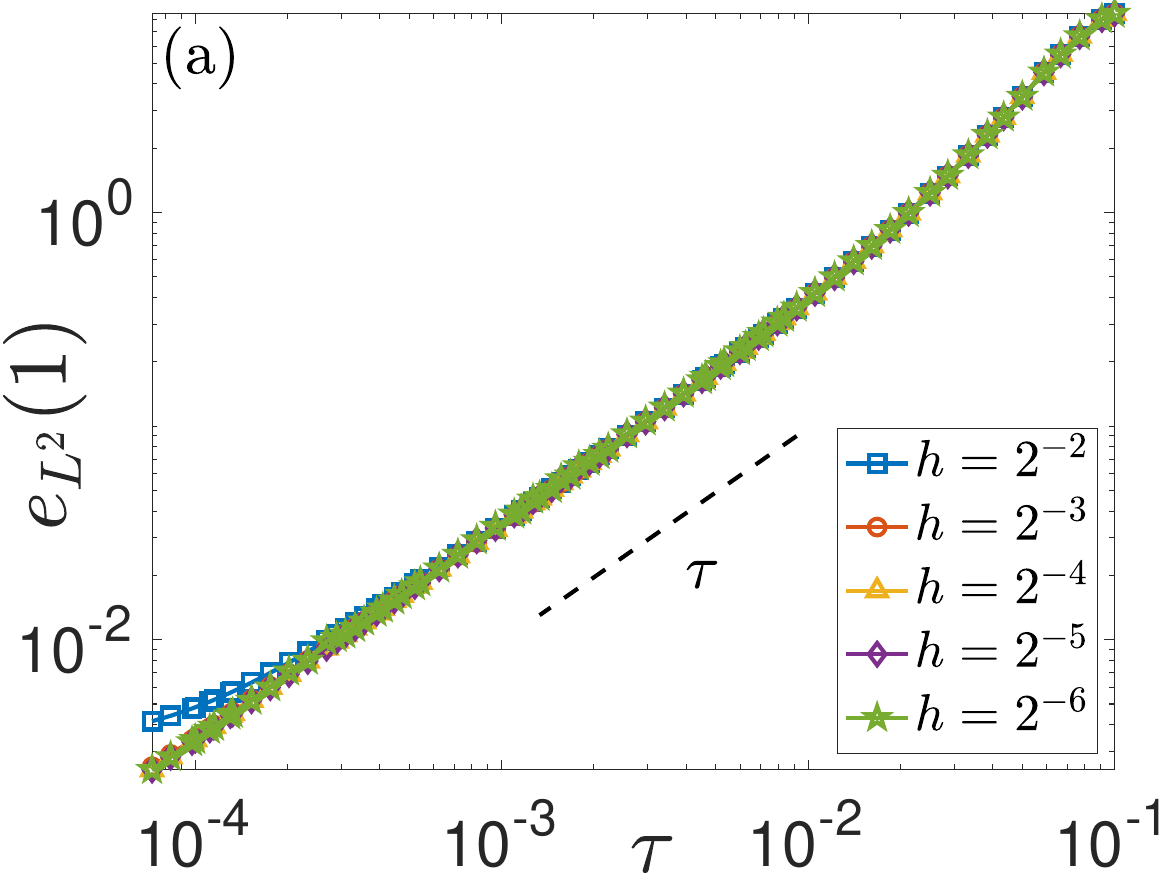}}\hspace{1em}
		{\includegraphics[width=0.475\textwidth]{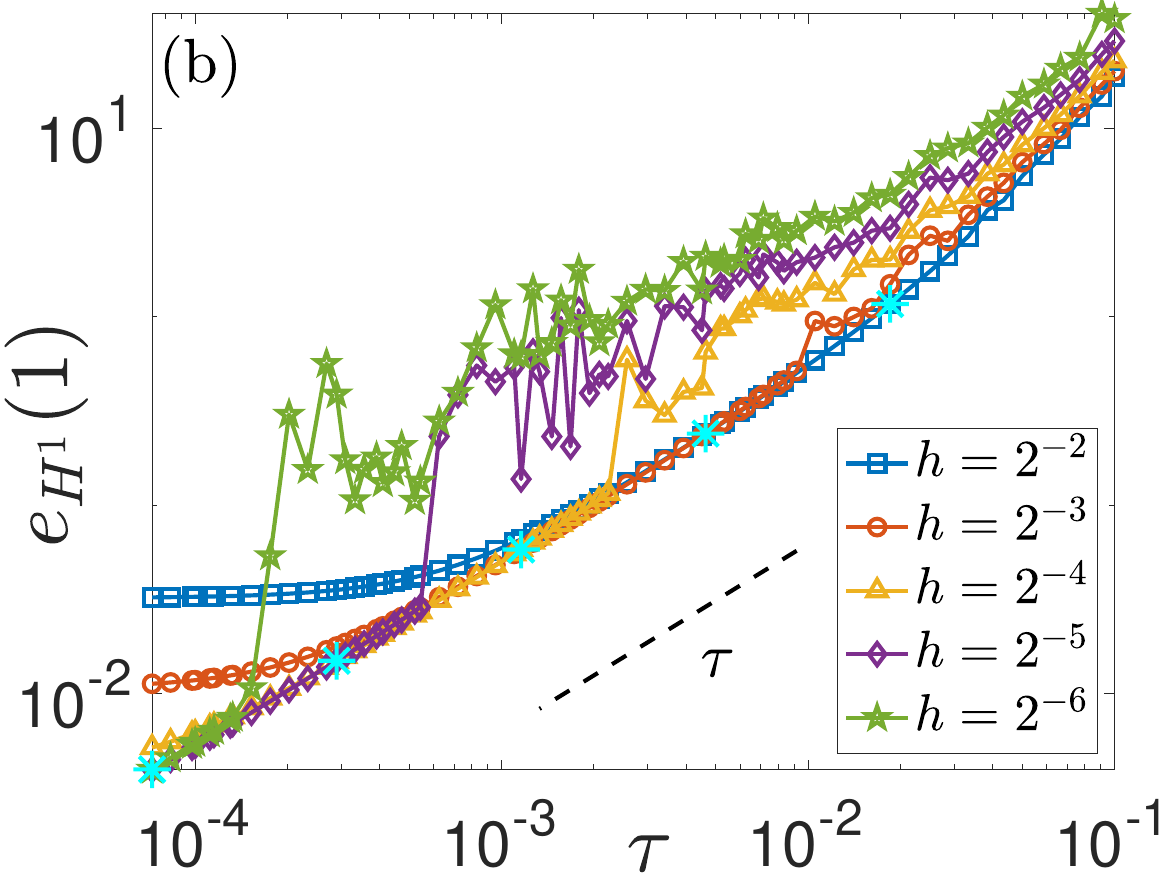}}
		\caption{Temporal errors of the LTSeFP method for the GPE \cref{NLSE} with low regularity potential \cref{eq:poten_1}: (a) $L^2$-norm errors and (b) $H^1$-norm errors }
		\label{fig:1_time}
	\end{figure}
	
	\begin{figure}[htbp]
		\centering
		{\includegraphics[width=0.475\textwidth]{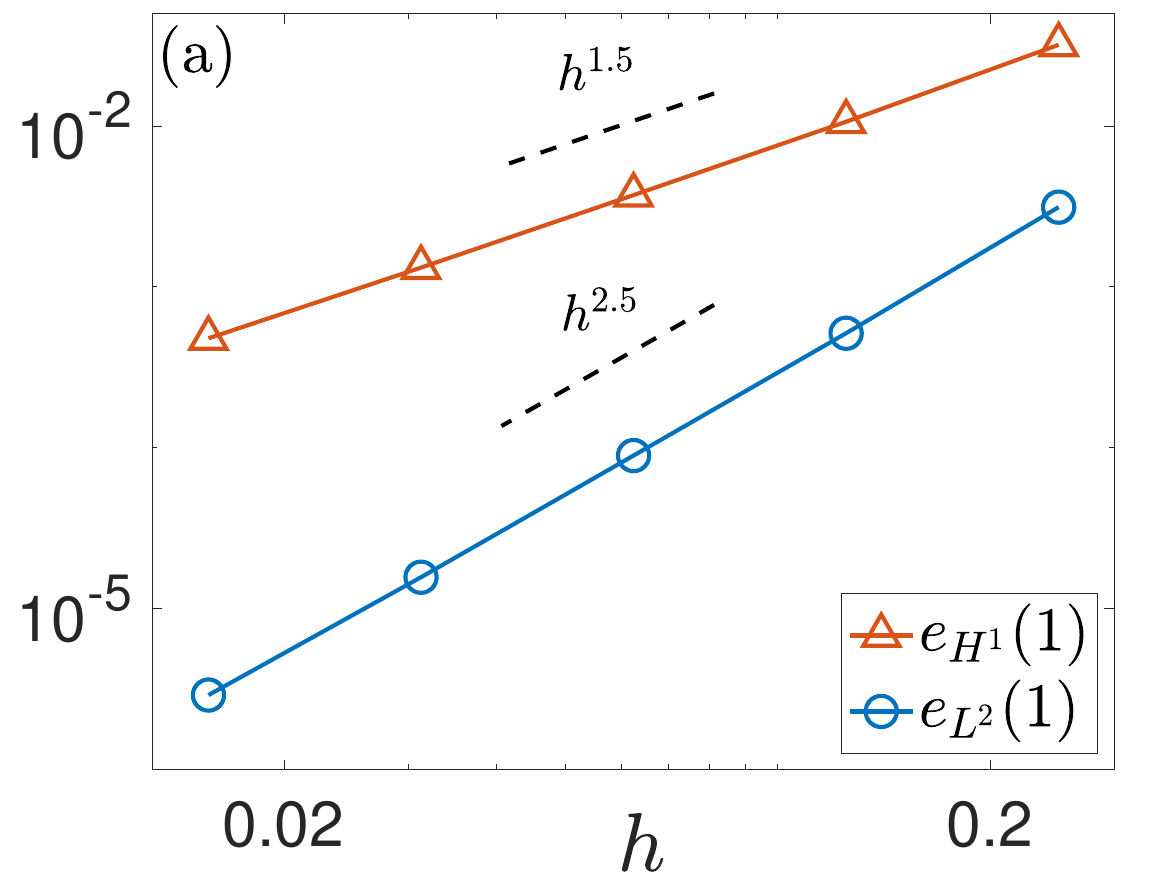}}\hspace{1em}
		{\includegraphics[width=0.475\textwidth]{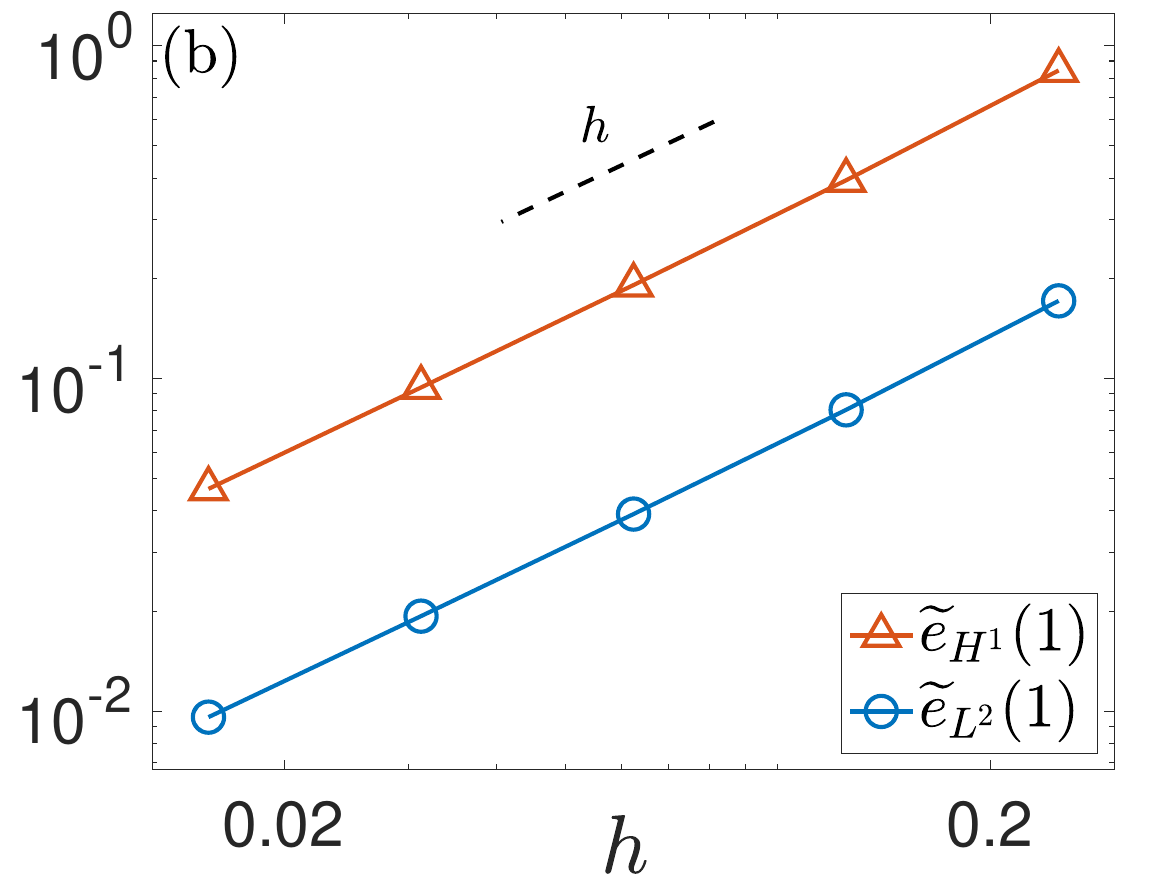}}
		\caption{Spatial errors of (a) LTSeFP method  and (b) LTSFP method for the GPE \cref{NLSE} with low regularity potential \cref{eq:poten_1}}
		\label{fig:1_space}
	\end{figure}
	
	Then we proceed to consider the time-dependent potential \cref{eq:poten_2}. We remark here that this potential does not satisfy the regularity assumptions made in \cref{thm:main} for the error estimates. Again, we plot the temporal errors with different choices of the mesh size $h$ in \cref{fig:2_time}. The spatial errors and the comparison with the LTSFP method are shown in \cref{fig:1_space}. Similar observations can be made as in the previous example with the same conclusions. The only difference is that the standard Fourier pseudospectral method shows an even lower convergence order in $H^1$-norm as can be seen from \cref{fig:2_space} (b).  
	
	\begin{figure}[htbp]
		\centering
		{\includegraphics[width=0.475\textwidth]{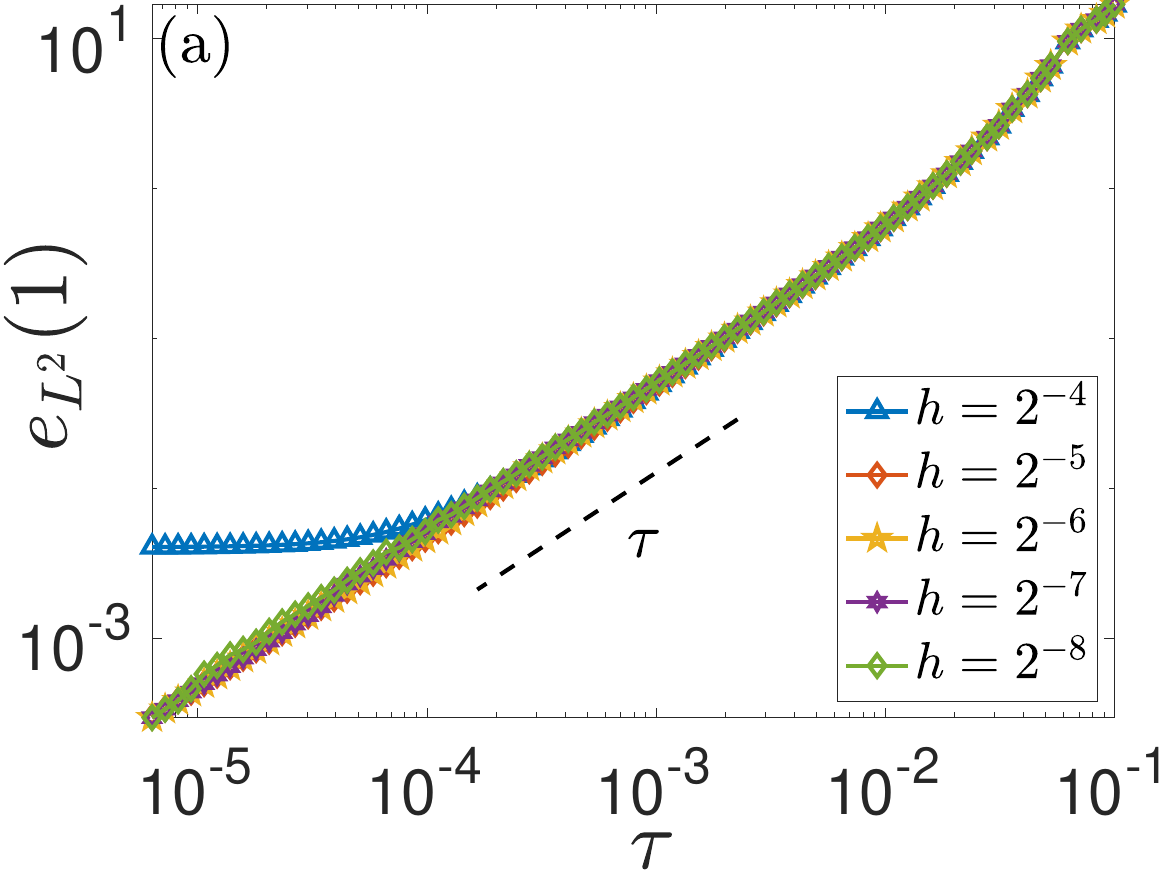}}\hspace{1em}
		{\includegraphics[width=0.475\textwidth]{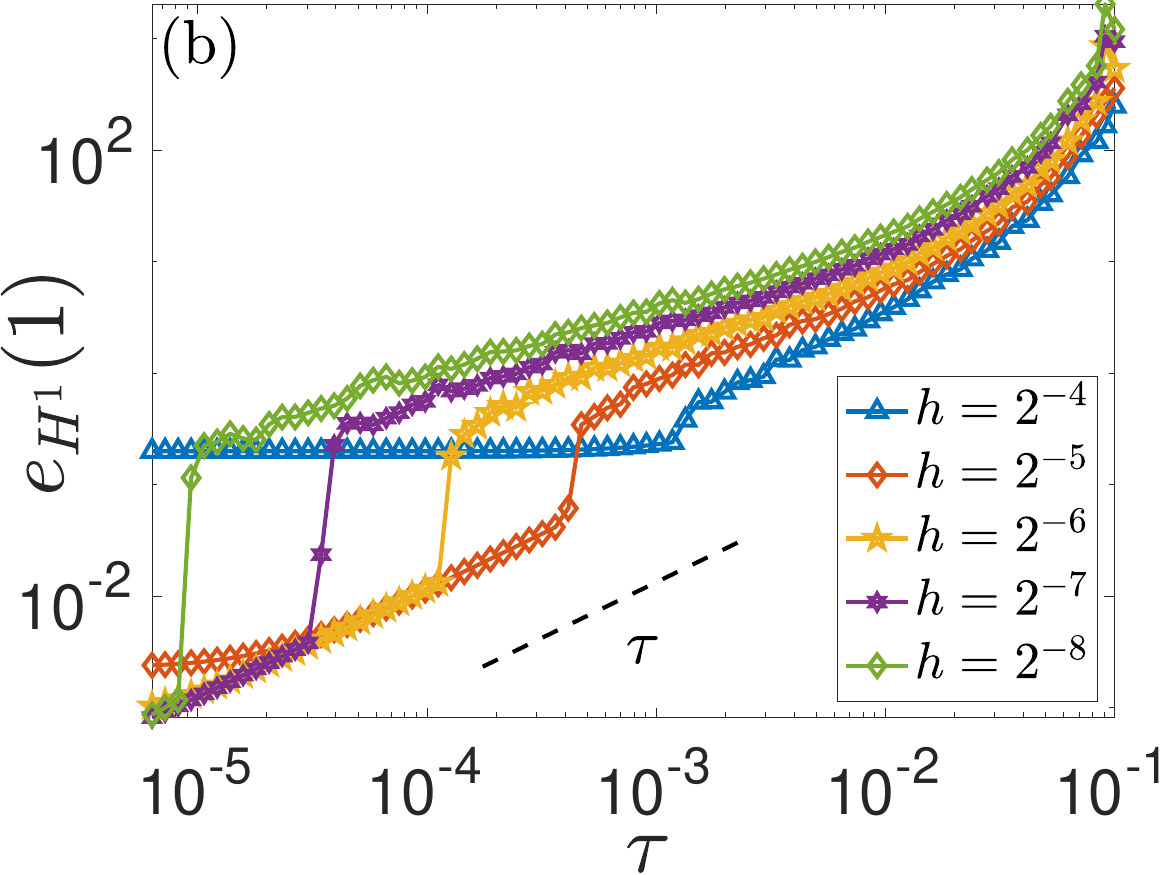}}
		\caption{Temporal errors of the LTSeFP method for the GPE \cref{NLSE} with low regularity potential \cref{eq:poten_2}: (a) $L^2$-norm errors and (b) $H^1$-norm errors }
		\label{fig:2_time}
	\end{figure}
	
	\begin{figure}[htbp]
		\centering
		{\includegraphics[width=0.475\textwidth]{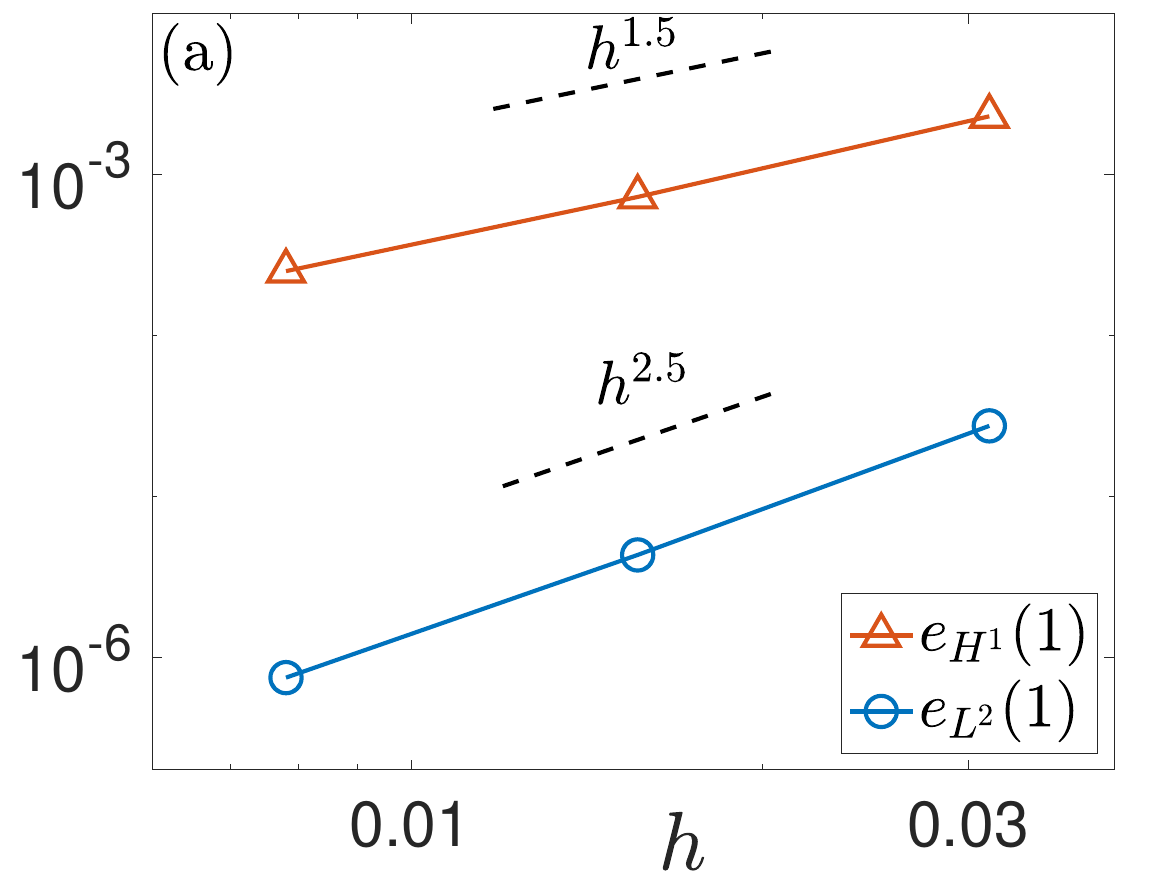}}\hspace{1em}
		{\includegraphics[width=0.475\textwidth]{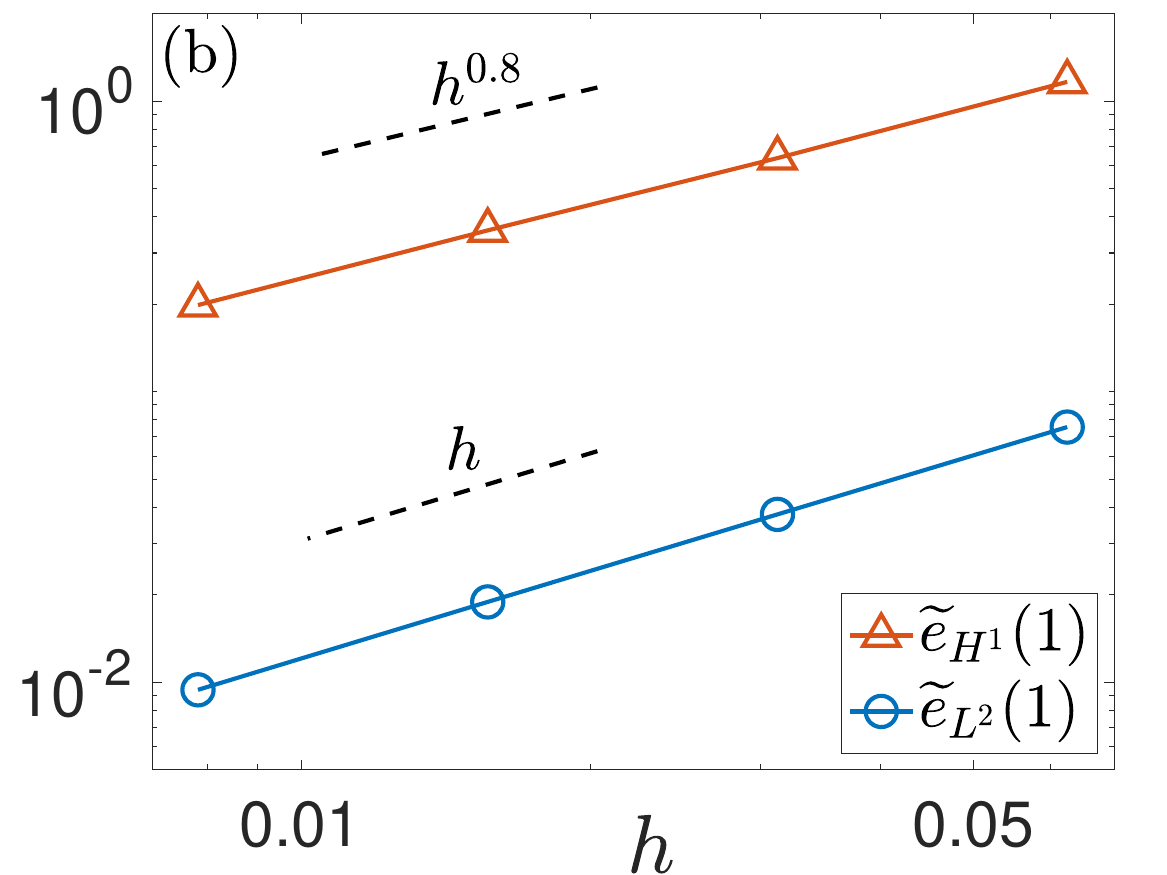}}
		\caption{Spatial errors of (a) LTSeFP method  and (b) LTSFP method for the GPE \cref{NLSE} with low regularity potential \cref{eq:poten_2}}
		\label{fig:2_space}
	\end{figure}
	
	\subsection{Applications}
	In this subsection, we shall use the LTSeFP method to compute some interesting phenomena including (i) the dynamics of cold atoms in a trapping potential continuously changing from single well to double well and (ii) a moving obstacle in superfluid. 
	
	For case (i), we choose $d=1$, $ \Omega = [-16, 16] $, $T_\text{max} = 1$ and use a square-well trapping potential
	\begin{equation}
		V_\text{trap}(x) = \left\{
		\begin{aligned}
			&0, &x \in (-6, 6), \\
			&x^2/2, &\text{otherwise}, 
		\end{aligned}
		\right. , 
		\qquad x \in \Omega. 
	\end{equation}
	In the middle of the well, we add a time-dependent potential barrier with width $l$ as
	\begin{equation}
		V_\text{bar}(x, t) = \left\{
		\begin{aligned}
			&5t, &x \in (-l, l), \\
			&0, &\text{otherwise}, 
		\end{aligned}
		\right. , 
		\qquad x \in \Omega. 
	\end{equation}
	to continuously change the single well into a double well. 
	
	In the simulation, we set $\beta = 1$, $V(x, t) = V_\text{trap}(x) + V_\text{bar}(x, t)$ in \cref{NLSE} and choose $\tau = 10^{-5}$, $h = 2^{-7}$. The initial datum is chosen as the ground state of the GPE \cref{NLSE} with time-independent square-well potential $V = V_\text{trap}$, which is computed by the gradient flow with discrete normalization (GFDN), also known as imaginary time evolution method \cite{bao2004,liu2021}. 
	
	In \cref{fig:1_dym}, we plot the density $|\psi(\cdot, t)|^2$ at different time $t$ and compare the influence of different widths of the potential barrier. We can observe that when the separating width of the barrier is small (\cref{fig:1_dym} (a) with $l=0.5$), the atoms are well separated into two wells while when the width is large (\cref{fig:1_dym} (b) with $ l=2 $), some atoms are also trapped in the barrier. 
	
	\begin{figure}[htbp]
		\centering
		{\includegraphics[width=0.475\textwidth]{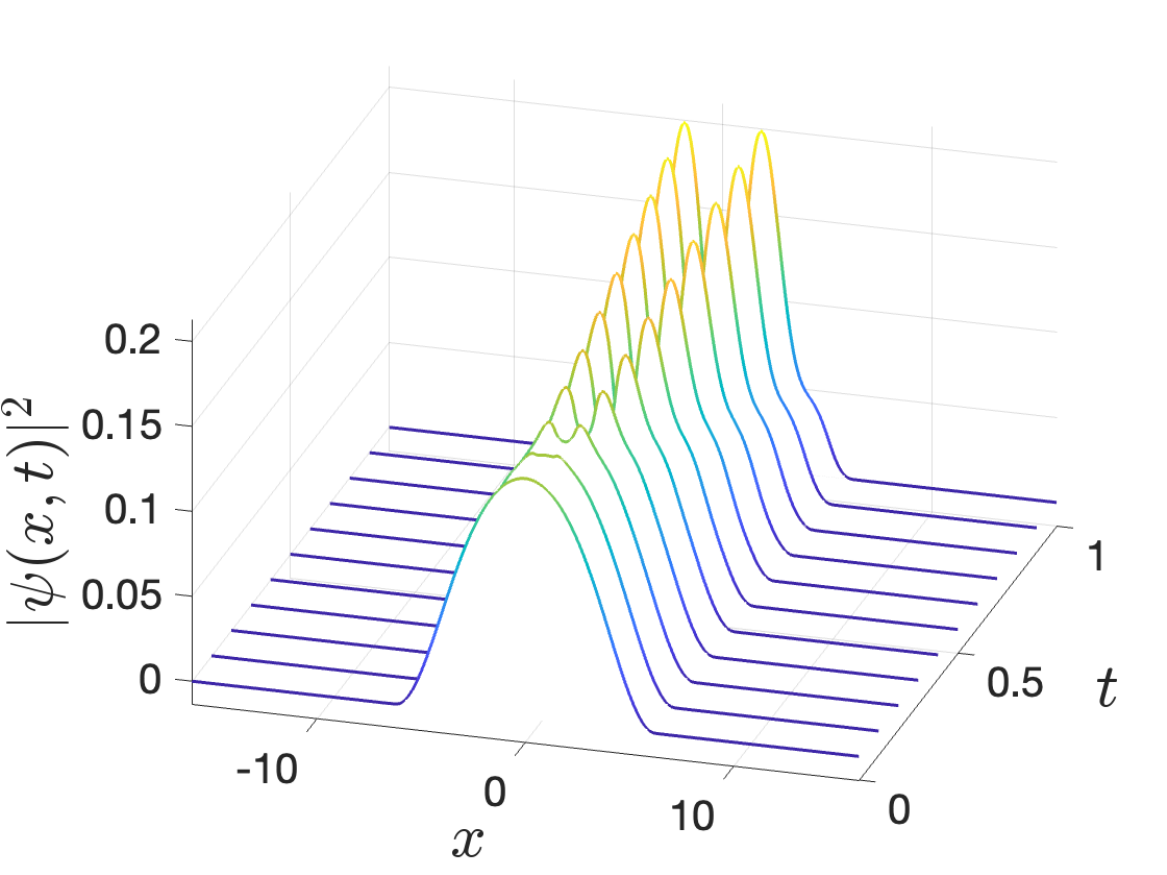}}\hspace{1em}
		{\includegraphics[width=0.475\textwidth]{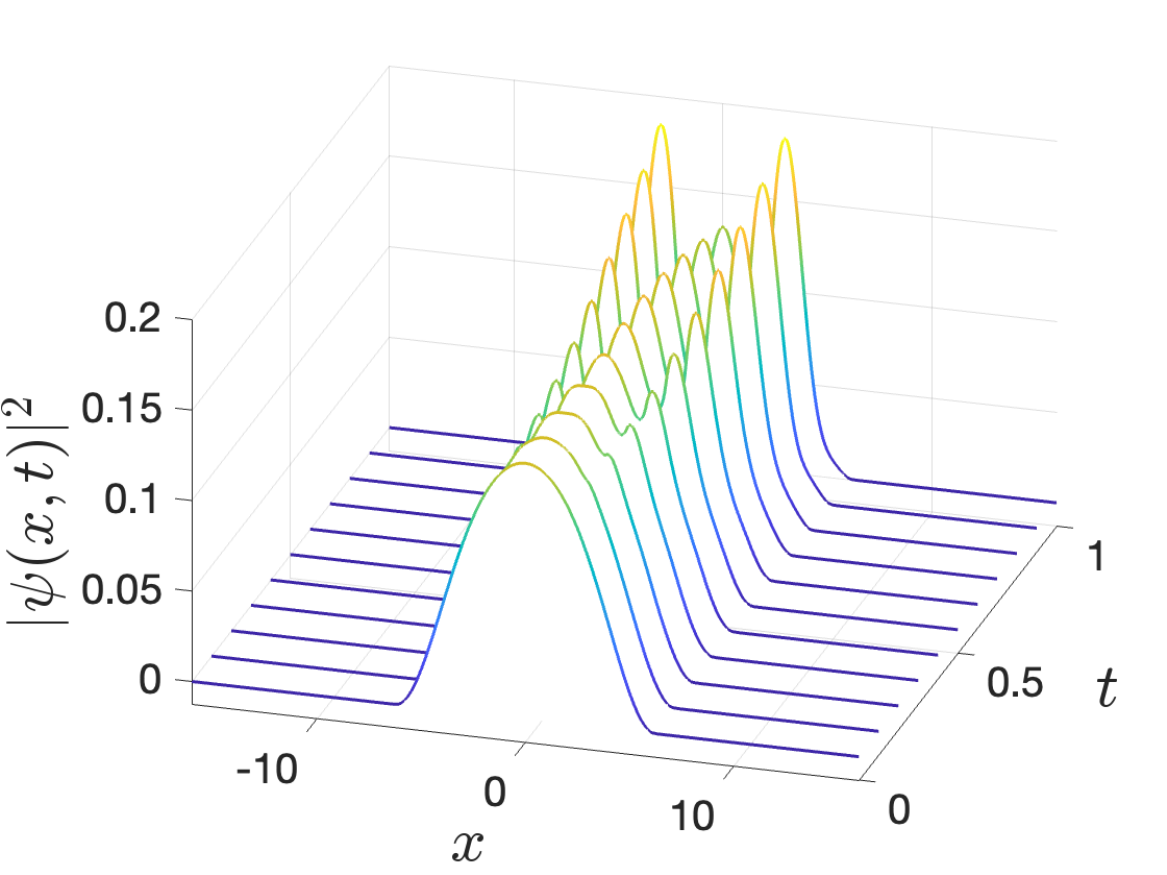}}
		\caption{Density at different time steps: (a) barrier width $l = 0.5$ and (b) barrier width $l = 2$}
		\label{fig:1_dym}
	\end{figure}
	
	For case (ii), we consider the 2D setting by choosing $d=2$ and $\Omega = (-8, 8) \times (-4, 4)$. The trapping potential is chosen as an anisotropic harmonic trap with strong confinement in the $y$-direction:
	\begin{equation}
		V_\text{ho}(\vx) = x^2/2 + 8 y^2, \qquad \vx = (x, y)^T \in \Omega. 
	\end{equation}
	The obstacle is modeled by a rectangular prism with height $A_0 = 10$, initially located at $\vx = \vx_0 = (5, 0)^T$ far away from the condensate and moving along $x$-direction with a constant speed $v$: 
	\begin{equation}
		V_\text{obs}(\vx, t) = A_0 V_\text{s}(\vx - \vx_0 + v t (1, 0)^T), \quad V_\text{s} (\vx) = \mathbbm{1}_{[-1/8, 1/8]^2} (\vx), \quad \vx = (x, y)^T \in \Omega.
	\end{equation}
	This setting is also depicted in \cref{fig:2_dym_ini} for a better illustration. 
	
	\begin{figure}[htbp]
		\centering
		{\includegraphics[width=\textwidth]{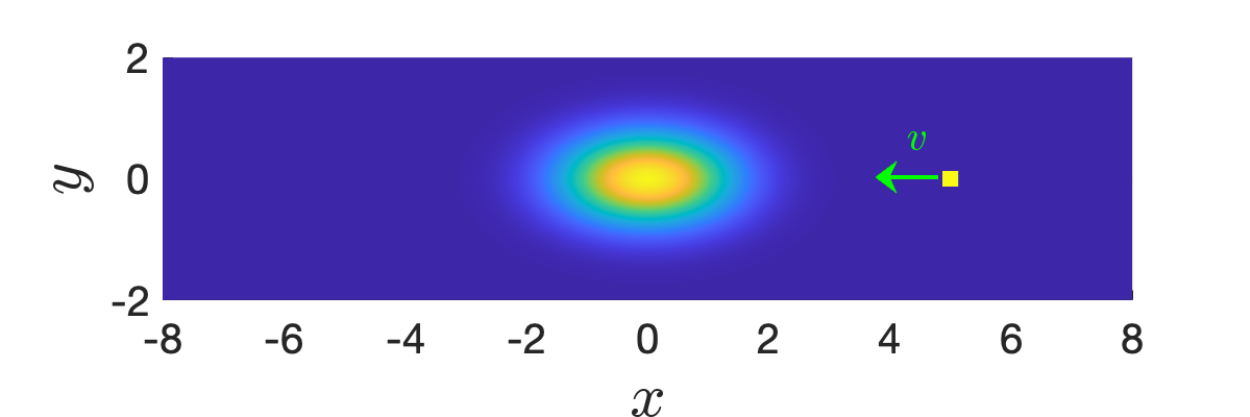}}
		\caption{Initial set-up}
		\label{fig:2_dym_ini}
	\end{figure}
	
	In the simulation, we set $V(\vx, t) = V_\text{ho}(\vx) + V_\text{obs}(\vx, t)$ and $\beta = 5$ in \cref{NLSE}. The time step size is chosen as $\tau = 2.5 \times 10^{-4}$ and the mesh size is $h=2^{-5}$ in both $x$- and $y$-directions. The initial data is chosen as a ground state of the GPE \cref{NLSE} with time-independent harmonic oscillator $V = V_\text{ho}$, which is computed again by the GFDN. 
	
	In \cref{fig:2_dym_1,fig:2_dym_2}, we plot the density $|\psi(\cdot, t)|^2$ at different time $t$ for the obstacle moving with velocity $v = 10$ and $v = 40$, respectively. We can observe that when the obstacle is moving with low velocity (\cref{fig:2_dym_1}), it can generate clear wave patterns with large wavelength, while, when the velocity is high (\cref{fig:2_dym_2}), the wave pattern is unclear and the wavelength is much smaller. 
	
	\begin{figure}[htbp]
		\centering
		{\includegraphics[width=\textwidth]{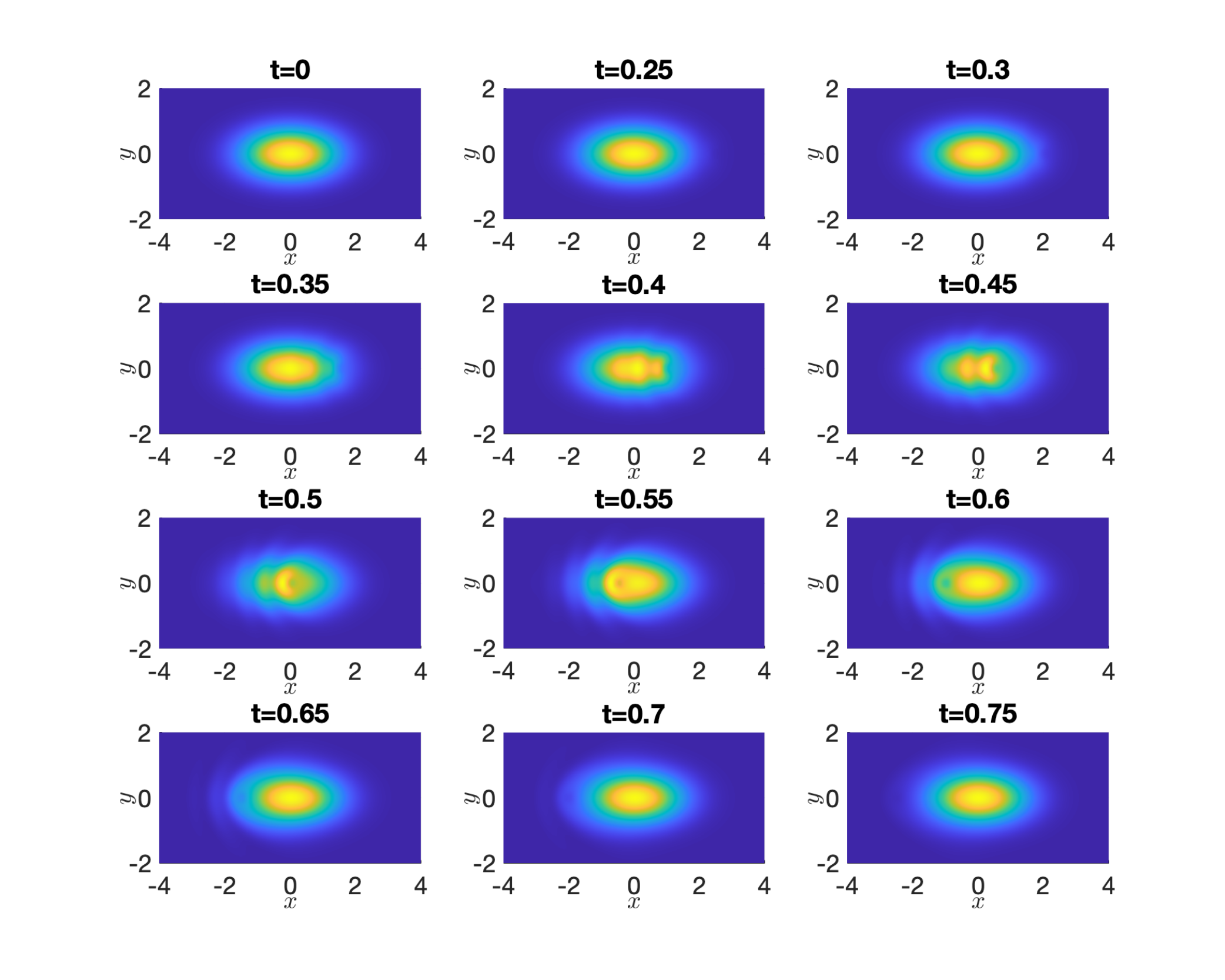}}
		\caption{Density at different time steps: obstacle velocity $v=10$ }
		\label{fig:2_dym_1}
	\end{figure}
	
	\begin{figure}[htbp]
		\centering
		{\includegraphics[width=\textwidth]{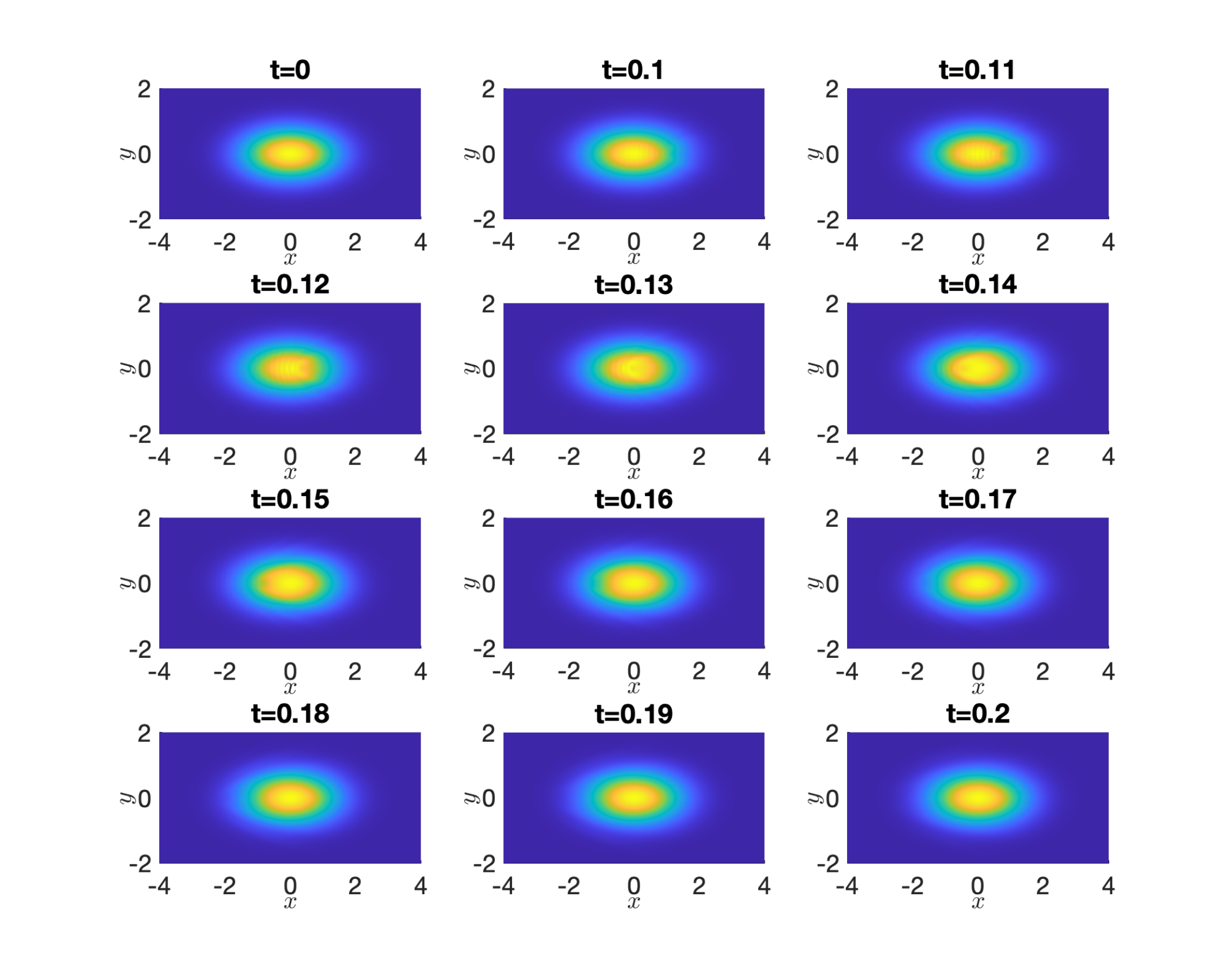}}
		\caption{Density at different time steps: obstacle velocity $v = 40$ }
		\label{fig:2_dym_2}
	\end{figure}
	
	\section{Conclusion}\label{sec:5}
	We proposed and analyzed a Lawson-time-splitting extended Fourier pseudospectral (LTSeFP) method for the Gross-Pitaevskii equation (GPE) with time-dependent low regularity potential. Theoreticaly, we proved that the LTSeFP method achieves first-order convergence in time and optimal-order convergence in space with respect to the regularity of the exact solution. Meanwhile, the computational cost of the LTSeFP method is comparable to the standard time-splitting Fourier pseudospectral method for most of physically relevant time-dependent low regularity potential. Hence, the LTSeFP is both accurate and efficient. Extensive numerical results were reported to confirm the error bounds and to show the excellent performance of the proposed method. 
	
	\section*{Acknowledgment}
	The work is partially supported by the Ministry of Education of Singapore under its AcRF Tier 2 funding MOE-T2EP20122-0002 (A-8000962-00-00). The authors would like to thank Prof. Weizhu Bao at National University of Singapore for fruitful discussions and instructions. 
	
	
	
	

\end{document}